\documentclass[reqno]{amsart}

\newcommand\version{April 06, 2022}


\usepackage{amsmath,amsfonts,amsthm,amssymb,amsxtra}
\usepackage{hyperref}
\usepackage[english]{babel}
\usepackage{bbm} 
\usepackage{bm} 
\usepackage{color}





\newtheorem{theorem}{Theorem}[section]
\newtheorem{proposition}[theorem]{Proposition}
\newtheorem{lemma}[theorem]{Lemma}
\newtheorem{corollary}[theorem]{Corollary}

\theoremstyle{definition}

\newtheorem{definition}[theorem]{Definition}
\newtheorem{example}[theorem]{Example}

\theoremstyle{remark}

\newtheorem{remark}[theorem]{Remark}
\newtheorem{remarks}[theorem]{Remarks}


\numberwithin{equation}{section}


\newcommand{\C}{\mathbb{C}}

\newcommand{\DG}{\mathrm{DG}}
\renewcommand{\epsilon}{\varepsilon}

\newcommand{\loc}{{\rm loc}}

\newcommand{\N}{\mathbb{N}}

\renewcommand{\phi}{\varphi}
\newcommand{\R}{\mathbb{R}}

\newcommand{\Z}{\mathbb{Z}}

\DeclareMathOperator{\im}{Im}

\DeclareMathOperator{\re}{Re}

\DeclareMathOperator{\supp}{supp}

\def\cl{\mathcal{L}}

\newcommand{\me}[1]{\mathrm{e}^{#1}}
\newcommand{\one}{\mathbf{1}}

\makeatletter
\newcommand*{\rom}[1]{\expandafter\@slowromancap\romannumeral #1@}
\makeatother


\begin{document}

\title[Complex-time heat kernels via Phragm\'en--Lindel\"of --- \version]{On complex-time heat kernels of fractional Schr\"odinger operators via Phragm\'en--Lindel\"of principle}

\author{Konstantin Merz}
\address[Konstantin Merz]{Institut f\"ur Analysis und Algebra, Technische Universit\"at Braunschweig, Universit\"atsplatz 2, 38106 Braunschweig, Germany}
\email{k.merz@tu-bs.de}

\subjclass[2010]{Primary: 35K08; Secondary: 35J10}
\keywords{Complex-time heat kernel, Phragm\'en--Lindel\"of principle, fractional Laplace, fractional Schr\"odinger operator, Davies--Gaffney estimate}

\date{\version}

\begin{abstract}
  We consider fractional Schr\"odinger operators with possibly singular potentials and derive certain spatially averaged estimates for its complex-time heat kernel. The main tool is a Phragm\'en--Lindel\"of theorem for polynomially bounded functions on the right complex half-plane. The interpolation leads to possibly nonoptimal off-diagonal bounds.
\end{abstract}

\maketitle

\vspace{-1em}
\section{Introduction}

Phragm\'en--Lindel\"of theorems
(cf.~\cite{BakNewman2010,Hille1962,Rudin1987,SteinShakarchi2003C})
are powerful complex analysis tools that extend the maximum modulus
principle to certain unbounded domains.
They are often used in the presence of exponential bounds, which are,
e.g., available in the analysis of heat kernels of elliptic
second-order differential operators.
Suppose, e.g., that $\me{-tH}$ is a symmetric Markov semigroup on
$L^2(\R^d)$ whose integral kernel satisfies
\begin{align}
  \label{eq:gaussian}
  0 \leq \me{-tH}(x,y) \leq c t^{-\frac d2}\exp\left[-\frac{b|x-y|^2}{t}\right]\,,
  \quad t>0,\,x,y\in\R^d\,,
\end{align}
where $b,c$ are positive constants.
In \cite[Theorem~3.4.8]{Davies1990} and \cite[Lemma~9, Theorem~10]{Davies1995}
Davies used the Phragm\'en--Lindel\"of principle
to show that for all $\epsilon>0$ there is $c_\epsilon>0$ such that
\begin{align}
  \label{eq:complexgaussian}
  |\me{-zH}(x,y)| \leq c_\epsilon(\re z)^{-\frac d2}\exp\left[-\re\left(\frac{b|x-y|^2}{(1+\epsilon)z}\right)\right]\,,
  \quad x,y\in\R^d
\end{align}
holds for all complex times $z\in\C_+:=\{z\in\C:\,\re(z)>0\}$, whenever
\eqref{eq:gaussian} holds.

Coulhon and Sikora \cite[Section~4]{CoulhonSikora2008} reversed Davies'
ideas and used the Phragm\'en--Lindel\"of principle to derive 
gaussian heat kernel estimates like \eqref{eq:complexgaussian}
from suitably weighted off-diagonal estimates -- so-called 
Davies--Gaffney estimates (cf.~\cite[(3.2)]{CoulhonSikora2008} or \eqref{eq:plgaussian} for $\theta\! = \!0$ and $\alpha\! = \!2$) --
and on-diagonal estimates like $\me{-tH}(x,x) \!\leq\! c t^{-\frac{d}{2}}$.
The latter can often be derived from Sobolev inequalities corresponding
to $H$ by Nash's method, cf.~Davies \cite[Section~2.4]{Davies1990} and
Milman and Semenov \cite{MilmanSemenov2004}.

Complex-time heat kernel estimates like \eqref{eq:complexgaussian} are
of paramount importance in many problems in harmonic analysis and partial
differential equations, e.g.,
in proving $L^p$ boundedness of spectral multipliers and convergence of
Riesz means, and investigating maximal regularity properties of the
Schr\"odinger evolution $\me{itH}$ for operators $H$ whose heat kernels
satisfy sub-gaussian estimates, see, e.g.,
\cite{BlunckKunstmann2002,BlunckKunstmann2003,Blunck2003,BlunckKunstmann2004,BlunckKunstmann2005,Blunck2007,Carronetal2002,Chenetal2016,Chenetal2020S,Chenetal2022,CoulhonDuong1999,CoulhonDuong2000,CoulhonDuong2001,DuongRobinson1996,Duongetal2002,Hebisch1990,Kriegler2014,Sikoraetal2014}.

On the other hand, there has been recent interest in the fractional
Laplace operator $(-\Delta)^{\alpha/2}$ in $L^2(\R^d)$ for $\alpha>0$
and its generated holomorphic semigroup $\me{-z(-\Delta)^{\alpha/2}}$
for $z\in\C_+$.
For $\alpha>0$ Blumenthal and Getoor \cite{BlumenthalGetoor1960} proved
\begin{align}
  \label{eq:heatkernel}
  |\me{-t(-\Delta)^{\alpha/2}}(x)| \leq c_{d,\alpha} \frac{t}{(t^{1/\alpha}+|x|)^{d+\alpha}}\,, \quad t>0,\ x\in\R^d\,,
\end{align}
whereas for $\alpha=1$ one has the representation via Poisson's kernel
\begin{align*}
  \me{-t(-\Delta)^{1/2}}(x) = c_d\,\frac{t}{(t^2+|x|^2)^{(d+1)/2}}
\end{align*}
for an explicit constant $c_d>0$.
The slow decay of these kernels and their extension to $\C_+$ -- that we
discuss momentarily -- complicates many arguments in problems where 
heat kernel estimates are the central element of the analysis like
\cite{Franketal2021,Merz2021,BuiDAncona2021}. In particular, they impede
the generalization of the above mentioned works on spectral multipliers for
operators whose heat kernels only decay algebraically.
As \eqref{eq:complexgaussian} indicates, complex-time heat kernels satisfy
worse bounds as $|\arg(z)|$ increases. Thus, the derivation of sharp
estimates becomes even more crucial in this scenario.

Zhao and Zheng \cite[Theorem~1.3]{ZhaoZheng2020} recently proved uniform
complex-time heat kernel estimates for $(-\Delta)^{\alpha/2}$ and all
$\alpha>0$ using the stationary phase method.
While for $\alpha=1$ one has the explicit formula
\begin{align}
  \label{eq:poissonexplicit}
  \me{-z(-\Delta)^{1/2}}(x)
  = c_d\frac{z}{(z^2+|x|^2)^{(d+1)/2}}\,, \quad z\in\C_+,\, x\in\R^d\,,
\end{align}
the derivation of such estimates for $\alpha\neq1$ is rather intricate.
Pointwise estimates in the case $|\arg(z)|=\pi/2$ are however well known,
see, e.g., Miyachi \cite[Proposition~5.1]{Miyachi1981},
Wainger \cite[pp.~41-52]{Wainger1965},
Huang et al.~\cite{Huangetal2017}
and \cite[pp.~2-3]{ZhaoZheng2020}.

Using perturbation theory, Zhao and Zheng extended their results
and derived uniform complex-time heat kernel
estimates for fractional Schr\"odinger operators
\begin{align}
  \label{eq:defhalpha}
  H_\alpha := (-\Delta)^{\alpha/2} + V \quad \text{in}\ L^2(\R^d)
\end{align}
when $V\in L_\loc^1(\R^d:\R)$ belongs to the higher order Kato class
$K_\alpha(\R^d)$ (cf.~\cite{DaviesHinz1998,Gulisashvili2002,YaoZheng2009}
for a precise definition). In this case $V$ is infinitesimally form bounded
with respect to $(-\Delta)^{\alpha/2}$ and $H_\alpha$ can be defined as the
self-adjoint Friedrichs extension of the corresponding quadratic form
with form core $C_c^\infty(\R^d)$.
Their estimates for the kernel of the holomorphic extension of
$\me{-tH_\alpha}$ to $\C_+$ read as follows.

\begin{theorem}[{\cite[Theorem~1.5]{ZhaoZheng2020}}]
  \label{zhaozheng}
  Let $\alpha>0$, $z=|z|\me{i\theta}\in\C_+$, and $V\in K_\alpha(\R^d)$.
  Then for any $\epsilon\in(0,1)$ there are $c_{d,\alpha}>0$ and
  $\mu_{\epsilon,V,d,\alpha}>0$ such that
  \begin{align}
    \label{eq:zhaozheng}
    \begin{split}
      & |\me{-zH_\alpha}(x,y)|\\
      & \quad \leq c_{d,\alpha} \me{\mu_{\epsilon,V,d,\alpha}|z|} (\cos\theta)^{-d(\frac1\alpha-\frac12)\one_{\{\alpha<1\}} -(\frac{d}{2}+\alpha-1)\one_{\{\alpha\geq1\}}}\frac{|z|}{(|z|^{\frac1\alpha}+|x-y|)^{d+\alpha}}
    \end{split}
  \end{align}
  holds for all $x,y\in\R^d$.
\end{theorem}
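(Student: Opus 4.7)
The plan is a two-step strategy: first derive the complex-time kernel estimate for the unperturbed fractional Laplacian $(-\Delta)^{\alpha/2}$, then incorporate the potential $V$ through an iterated Duhamel expansion. This mirrors the two-stage logic in \cite{ZhaoZheng2020}: a stationary-phase analysis of the free kernel feeding into a Dyson-type perturbation series.

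For the free operator I would start from the Fourier representation
\begin{equation*}
  \me{-z(-\Delta)^{\alpha/2}}(x) = (2\pi)^{-d}\int_{\R^d}\me{ix\cdot\xi-z|\xi|^\alpha}\,\td\xi,\quad z=|z|\me{i\theta}\in\C_+,
\end{equation*}
and scale $\xi=|z|^{-1/\alpha}\eta$ to reduce matters to bounding
\begin{equation*}
  I_\alpha(r,\theta) := \int_{\R^d}\me{ir\omega\cdot\eta-\me{i\theta}|\eta|^\alpha}\,\td\eta,\qquad \omega\in\Sph^{d-1},\ r=|z|^{-1/\alpha}|x|.
\end{equation*}
The two regimes $r\lesssim 1$ and $r\gg 1$ are handled by a combination of contour deformation (shifting $\eta\mapsto\eta-i\lambda\omega$ by an amount $\lambda$ calibrated against $\cos\theta$) and stationary-phase analysis. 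As $|\theta|\to\pi/2$ the exponential decay factor $\me{-\cos\theta\,|\eta|^\alpha}$ degenerates to pure oscillation, which is exactly what generates the $(\cos\theta)$-prefactor; its exponent depends on the effective dimension of the critical set of the oscillatory phase and naturally produces the split between $\alpha\geq 1$ (standard contribution yielding $-\tfrac{d}{2}-\alpha+1$) and $\alpha<1$ (non-smoothness of $|\xi|^\alpha$ at the origin combined with slow decay at infinity forcing the worse exponent $-d(\tfrac{1}{\alpha}-\tfrac{1}{2})$). An alternative route closer to the spirit of the present paper would be to start from the on-axis pointwise estimates of Miyachi \cite{Miyachi1981} and Wainger \cite{Wainger1965} at $\theta=\pm\pi/2$ together with the Blumenthal--Getoor bound \eqref{eq:heatkernel} at $\theta=0$, and interpolate via Phragm\'en--Lindel\"of.

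For the second step one invokes the iterated Duhamel formula
\begin{equation*}
  \me{-zH_\alpha}(x,y) = \sum_{n=0}^\infty (-1)^n \int_{\Delta_n(z)}\int_{(\R^d)^n} \prod_{j=0}^n \me{-w_j(-\Delta)^{\alpha/2}}(x_j,x_{j+1}) \prod_{j=1}^n V(x_j)\,\td x\,\td w,
\end{equation*}
with $x_0=x$, $x_{n+1}=y$, and $\Delta_n(z)$ the simplex of decompositions $w_0+\cdots+w_n=z$ along the ray $\{tz/|z|:t\in[0,|z|]\}$. Inserting the free-kernel estimate from step one into each factor and using the higher-order Kato-class hypothesis $V\in K_\alpha(\R^d)$ to bound the iterated convolution with $|V|$ uniformly in position, one estimates the $n$-th term by $C^n|z|^n/n!$ times the free bound on $(x,y)$. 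Summation in $n$ produces the exponential factor $\me{\mu_{\epsilon,V,d,\alpha}|z|}$, while the polynomial spatial profile $|z|/(|z|^{1/\alpha}+|x-y|)^{d+\alpha}$ and the $(\cos\theta)$-prefactor inherited from the free kernel survive intact.

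The main obstacle is clearly the free estimate for $|\theta|$ close to $\pi/2$, especially when $\alpha<1$: there the symbol $|\xi|^\alpha$ is non-smooth at the origin and decays only slowly at infinity, so one must dyadically decompose in frequency and balance oscillation against degenerating exponential decay in each shell. Extracting the \emph{sharp} power of $\cos\theta$, rather than a crude finite power, is the delicate point. Once the free bound is in hand, the perturbation step becomes largely bookkeeping, provided the Kato-class estimate is quantified in a way that does not destroy the $\cos\theta$ dependence of the prefactor.
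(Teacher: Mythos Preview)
The paper does not prove this theorem at all: Theorem~\ref{zhaozheng} is quoted verbatim from \cite[Theorem~1.5]{ZhaoZheng2020} and used as input, not as an object of proof. So there is no ``paper's own proof'' to compare against.

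That said, your outline is a faithful sketch of the strategy in the original reference as the present paper describes it: stationary phase for the free kernel $\me{-z(-\Delta)^{\alpha/2}}$ (this is \cite[Theorem~1.3]{ZhaoZheng2020}, cf.\ the discussion around \eqref{eq:heatkernel}), followed by perturbation theory via an iterated Duhamel/Dyson expansion using the Kato-class hypothesis on $V$. Your identification of the main difficulty --- the sharp power of $\cos\theta$ in the free estimate near $|\theta|=\pi/2$, with the $\alpha<1$ case requiring separate treatment because of the non-smoothness of $|\xi|^\alpha$ at the origin --- is also consistent with what the introduction says about \cite{ZhaoZheng2020}. The alternative route you mention (interpolating between $\theta=0$ and $|\theta|=\pi/2$ via Phragm\'en--Lindel\"of) is indeed closer in spirit to what the present paper does for its \emph{own} results (Corollaries~\ref{PLapplied} and~\ref{PLapplied2}), but note that the paper explicitly disclaims optimality of the off-diagonal decay obtained that way, so this route would likely not recover the sharp exponent in \eqref{eq:zhaozheng}.
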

Estimate \eqref{eq:zhaozheng} reflects the best possible off-diagonal
pointwise decay for $|x-y|\gg|z|^{1/\alpha}$ and all $|\theta|<\pi/2$
(compare with \eqref{eq:heatkernel}) and is particularly useful for
$|z|\lesssim1$.
The parameter $\mu_{\epsilon,V,d,\alpha}$ is defined in \cite[p.~22]{ZhaoZheng2020}.
Due to its complicated dependence on $\epsilon$, an optimization with respect
to $\epsilon\in(0,1)$ does not seem to be straightforward.
Moreover, their proof gives $\mu_{\epsilon,V,d,\alpha}>0$ even if $V\geq0$.

\medskip
In this note we derive a Phragm\'en--Lindel\"of principle for polynomially
bounded functions (Theorem \ref{PL}) and apply it to obtain suitably weighted
and averaged estimates for $\me{-zH_\alpha}$ that are not uniform in $\theta$
but do not deteriorate for $|z|\gg1$ (Subsections \ref{ss:posv}--\ref{ss:negv}).
Moreover, we can allow for critically singular potentials $V$, like the Hardy
potential $|x|^{-\alpha}$, which do not belong to $K_\alpha(\R^d)$.
In this case $\me{-tH_\alpha}$ is generally not $L^1\to L^\infty$ bounded.
On the downside, it is not clear whether the off-diagonal decay of our
estimates is optimal, especially when $|\pi/2-|\theta||\ll1$.

\subsection*{Organization and notation}
In Section~\ref{s:pl} we state and prove a Phragm\'en--Lindel\"of
theorem for polynomially bounded functions.
We apply this theorem in Section~\ref{s:application} to derive
estimates for the complex-time heat kernel of fractional Schr\"odinger
operators with non-negative potentials (Corollary~\ref{PLapplied} and
Corollary~\ref{PLapplied2} for a pointwise estimate), and
potentials with a non-vanishing, possibly singular, negative part
(Theorem~\ref{plgge} and Corollaries~\ref{plggecor}--\ref{lpboundednesscomplex}).
The applicability of these bounds is discussed in Subsection \ref{ss:applicability}.
In Section~\ref{ss:consequencesgge} we collect some properties
of the estimates -- so-called dyadic Davies--Gaffney estimates
(Definitions~\ref{gge} and \ref{ggerestricted}) --
that we discuss when $V$ has a negative part.

We write $A\lesssim B$ for two non-negative quantities $A,B\geq0$ to indicate
that there is a constant $C>0$ such that $A\leq C B$. If $C=C_\tau$ depends on
a parameter $\tau$, we write $A\lesssim_\tau B$.
The dependence on fixed parameters like $d$ and $\alpha$ is sometimes omitted.
The notation $A\sim B$ means $A\lesssim B\lesssim A$.
All constants are denoted by $c$ or $C$ and are allowed to change from line
to line.
We abbreviate $A\wedge B:=\min\{A,B\}$ and $A\vee B:=\max\{A,B\}$.
The Heaviside function is denoted by $\theta(x)$. We use the convention
$\theta(0)=1$.
The indicator function and the Lebesgue measure of a set $\Omega\subseteq\R^d$
are denoted by $\one_\Omega$ and $|\Omega|$, respectively.
The euclidean distance between two sets $\Omega_1,\Omega_2\subseteq\R^d$ is
denoted by $d(\Omega_1,\Omega_2):=\inf_{x\in\Omega_1,y\in\Omega_2}|x-y|$.
If $T:L^p(\R^d)\to L^q(\R^d)$ is a bounded linear operator, we write
$T\in\mathcal{B}(L^p\to L^q)$ and denote its operator norm by $\|T\|_{p\to q}$.
For $1\leq p\leq\infty$ we write $p'=(1-1/p)^{-1}$.

\section{Phragm\'en--Lindel\"of principle with polynomial bounds}
\label{s:pl}

\begin{theorem} 
  \label{PL}
  Let $X$ be a Banach space equipped with a norm $\|\cdot\|$ and
  $F:\C_+=\{z\in\C:\,\re(z)>0\}\to X$ be a holomorphic function satisfying
  \begin{align}
    \label{eq:assum1}
    \|F(|z|\me{i\theta})\|
    & \leq a_1(|z|\cos\theta)^{-\beta_1}
      \quad \text{and}\\
    \label{eq:assum2}
    \|F(|z|)\|
    & \leq a_1|z|^{-\beta_1}\left(\frac{a_2}{|z|}\right)^{-\beta_2}\cdot \left(\frac{a_3}{|z|}\right)^{\beta_3}
  \end{align}
  for some $a_1,a_2,a_3>0$,
  $\beta_1,\beta_2,\beta_3\geq0$,
  all $|z|>0$, and all $|\theta|<\pi/2$.
  Then for all $\epsilon\in(0,1)$ one has
  \begin{align}
    \label{eq:pl1}
    \begin{split}
      \|F(|z|\me{i\theta})\|
      & \leq a_1 (|z|\cos\theta)^{-\beta_1}
      \left[ 1 \wedge \epsilon^{-\beta_1}\left( \left(\frac{a_2}{|z|}\right)^{-\beta_2} \cdot \left(\frac{a_3}{|z|}\right)^{\beta_3}\right)^{1-|\theta|/\gamma(\epsilon,\theta)}\right]
    \end{split}
  \end{align}
  for all $|\theta|<\pi/2$ and $|z|>0$, where
  $\gamma(\epsilon,\theta):=\epsilon|\theta|+(1-\epsilon)\pi/2$.
\end{theorem}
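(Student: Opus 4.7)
After reducing to scalar $h : \C_+ \to \C$ via Hahn--Banach and assuming WLOG $\theta \geq 0$ by the symmetry $\zeta \mapsto \bar\zeta$, I apply a Phragm\'en--Lindel\"of interpolation on the sector $\Omega := \{\zeta : 0 < \arg\zeta < \gamma\}$ of opening $\gamma := \gamma(\epsilon,\theta) \in (\theta, \pi/2)$. The target point $z = |z|e^{i\theta}$ lies in $\Omega$, and the two boundary rays are controlled, respectively, by (\ref{eq:assum2}) on $\arg\zeta = 0$ and by (\ref{eq:assum1}) on $\arg\zeta = \gamma$. The prefactor $\epsilon^{-\beta_1}$ in (\ref{eq:pl1}) originates from the elementary inequality $\cos\gamma \geq \epsilon\cos\theta$ (a consequence of $\sin(\epsilon x) \geq \epsilon\sin x$ on $[0,\pi/2]$), which upgrades the bound from (\ref{eq:assum1}) on $\arg\zeta = \gamma$ to $|h(\zeta)| \leq a_1\epsilon^{-\beta_1}(|\zeta|\cos\theta)^{-\beta_1}$.

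\textbf{Transfer to a strip.} Let $r := |z| > 0$ and change variable $w := \log\zeta$; then $g(w) := h(e^w)$ is holomorphic on $S := \{w : 0 < \im w < \gamma\}$ and, writing $w = \sigma + i\phi$, the assumptions read
\begin{align*}
|g(\sigma)| \leq C_0\, e^{-\mu\sigma}, \qquad |g(\sigma + i\gamma)| \leq C_\gamma\, e^{-\beta_1\sigma},
\end{align*}
with $C_0 := a_1 a_2^{-\beta_2}a_3^{\beta_3}$, $\mu := \beta_1 - \beta_2 + \beta_3$, $C_\gamma := a_1\epsilon^{-\beta_1}(\cos\theta)^{-\beta_1}$, together with the global interior bound $|g(w)| \leq a_1(\cos\phi)^{-\beta_1} e^{-\beta_1\sigma}$ inherited from (\ref{eq:assum1}).

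\textbf{Harmonic majorant and Phragm\'en--Lindel\"of.} The main step is to introduce the bi-affine harmonic function $v(\sigma + i\phi) := A + B\phi + C\sigma + D\sigma\phi$ on $S$, with coefficients $A = \log C_0$, $B = \gamma^{-1}\log(C_\gamma/C_0)$, $C = -\mu$, $D = (\mu - \beta_1)/\gamma$ arranged so that $e^v$ matches the two boundary bounds on $g$. Since $v$ is a real combination of $\re 1$, $\re w$, $\re(-iw)$, $\re(-iw^2/2)$, it admits the explicit holomorphic primitive $\tilde v(w) := A + (C - iB)w - (iD/2)w^2$, whence $\Psi := \exp(\tilde v)$ is a non-vanishing holomorphic function on $\C$ with $|\Psi| = e^v$. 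The quotient $G := g/\Psi$ is holomorphic on $S$ and satisfies $|G| \leq 1$ on both boundary rays by construction; combining the interior bound on $g$ with the form of $v$ gives $|G(w)| \leq C'\exp(|\beta_3 - \beta_2|\,|\sigma|)$ uniformly in $\phi \in [0,\gamma]$. This single-exponential rate is well below the double-exponential tolerance $\exp(\beta\exp(\alpha|\sigma|))$ (any $\alpha < \pi/\gamma$) that Phragm\'en--Lindel\"of allows on a strip of width $\gamma < \pi/2$, so $|G| \leq 1$ throughout $S$, i.e.\ $|g(w)| \leq e^{v(w)}$.

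\textbf{Conclusion.} Evaluating at $w_0 := \log r + i\theta$ and simplifying the exponents yields
\[
\|F(re^{i\theta})\| \leq a_1 r^{-\beta_1}\bigl[(a_2/r)^{-\beta_2}(a_3/r)^{\beta_3}\bigr]^{1 - \theta/\gamma}\bigl[\epsilon^{-\beta_1}(\cos\theta)^{-\beta_1}\bigr]^{\theta/\gamma}.
\]
Since $\epsilon^{-\beta_1}(\cos\theta)^{-\beta_1} \geq 1$ and $\theta/\gamma \in (0,1)$, replacing the exponent $\theta/\gamma$ by $1$ only enlarges the bound, so this is at most the nontrivial candidate $a_1(r\cos\theta)^{-\beta_1}\epsilon^{-\beta_1}[(a_2/r)^{-\beta_2}(a_3/r)^{\beta_3}]^{1-\theta/\gamma}$ inside the minimum in (\ref{eq:pl1}); the trivial candidate $a_1(r\cos\theta)^{-\beta_1}$ is furnished directly by (\ref{eq:assum1}). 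The main technical obstacle I anticipate is the bookkeeping behind verifying the single-exponential growth of $G = g/\Psi$, since the various powers of $\cos\phi$, $\epsilon$, and the exponentials in $\sigma$ and $\phi$ must be tracked delicately through the quotient in order to apply the Phragm\'en--Lindel\"of principle rigorously.
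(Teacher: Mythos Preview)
Your proof is correct and follows essentially the same route as the paper's: both interpolate on the sector $\{0<\arg\zeta<\gamma\}$ by multiplying $F$ by a holomorphic function whose modulus is $\exp$ of a function harmonic and bi-affine in $(\log|\zeta|,\arg\zeta)$, then invoke the three-lines/Phragm\'en--Lindel\"of principle. The paper packages this multiplier explicitly as $z^{-\beta_1}H_2(z)H_3(z)$ with $H_j(z)=(a_jz)^{\pm\beta_j(1+\frac{i}{2\gamma}\log(a_jz))}$ acting on $F(z^{-1})$, whereas you pass to the strip via $w=\log\zeta$ and build the same object abstractly as $\Psi=\exp(\tilde v)$ with $\tilde v$ quadratic in $w$; unwinding, your $\Psi$ and the paper's $z^{\beta_1}/(H_2H_3)$ differ only by the inversion $z\mapsto z^{-1}$ and the order in which the inequality $\cos\gamma\geq\epsilon\cos\theta$ is applied (you use it on the boundary before interpolating, the paper after). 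One point where your write-up is actually more complete: you verify the sub-double-exponential growth of $G=g/\Psi$ needed for Phragm\'en--Lindel\"of on the strip, which the paper leaves implicit when it invokes the three lines lemma.
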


\begin{proof}
  For $\gamma\in(0,\pi/2)$ and $z=|z|\me{i\theta}$ let
  \begin{align}
    \label{eq:defg}
    G(z) := z^{-\beta_1}F(z^{-1}) \cdot H_{2}(z) \cdot H_{3}(z)
  \end{align}
  with
  \begin{align}
    \label{eq:choicePL}
    \begin{split}
      H_{2}(z)
      := \exp\left(\beta_2 \log(a_2 z)\left(1+\frac{i}{2\gamma}\log(a_2z)\right)\right)
      = (a_2z)^{\beta_2(1+\frac{i}{2\gamma}\log(a_2z))}
    \end{split}
  \end{align}
  and
  \begin{align}
    \label{eq:choicePL2}
    H_{3}(z)
    := \exp\left(-\beta_3 \log(a_3 z)\left(1+\frac{i}{2\gamma}\log(a_3z)\right)\right)
    = (a_3z)^{-\beta_3(1+\frac{i}{2\gamma}\log(a_3 z))}\,.
  \end{align}
  Here $\log(|z|\me{i\theta}):=\log(|z|)+i\theta$ with $|\theta|<\pi$
  is the principal branch of the logarithm.
  By the assumptions on $\|F(z)\|$ for $\theta=0$ and $\theta=\gamma$,
  and the bounds
  \begin{align*}
    |H_{2}(|z|)| \leq (a_2|z|)^{\beta_2}
  \end{align*}
  and
  \begin{align*}
    |H_{2}(|z|\me{i\gamma})|
    & = \left|\exp\left(\frac{i\beta_2}{2}\left((\log|a_2z|)^2/\gamma + \gamma\right)\right)\right| = 1\,,
  \end{align*}
  and
  \begin{align*}
    |H_{3}(|z|)|  \leq (a_3|z|)^{-\beta_3}
    \qquad \text{and} \qquad
    |H_{3}(|z|\me{i\gamma})| =1\,,
  \end{align*}
  respectively, we have
  \begin{align*}
    \begin{split}
      \|G(|z|)\|
      & \leq |z|^{-\beta_1} \cdot a_1 |z|^{\beta_1}\left(a_2|z|\right)^{-\beta_2} (a_2|z|)^{\beta_2} \cdot (a_3|z|)^{\beta_3} (a_3|z|)^{-\beta_3} \leq a_1
    \end{split}
  \end{align*}
  and
  \begin{align*}
    \begin{split}
      \|G(|z|\me{i\gamma})\|
      & \leq |z|^{-\beta_1} \cdot a_1 |z|^{\beta_1}(\cos\gamma)^{-\beta_1}
      = a_1(\cos\gamma)^{-\beta_1}\,.
    \end{split}
  \end{align*}
  Combining the above two formulas using the three lines lemma shows
  \begin{align*}
    \|G(|z|\me{i\theta})\|
    \leq a_1(\cos\gamma)^{-\beta_1\theta/\gamma}
    \leq a_1(\cos\gamma)^{-\beta_1} 
  \end{align*}
  for all $0\leq\theta\leq\gamma$ and $|z|>0$.
  Plugging this estimate and the identities
  \begin{align*}
    \begin{split}
      |H_{2}(z^{-1})^{-1}|
      & = |(a_2z^{-1})^{-\beta_2(1+\frac{i}{2\gamma}\log(a_2z^{-1}))}|
      = (a_2|z|^{-1})^{-\beta_2(1+\theta/\gamma)}\\
      |H_{3}(z^{-1})^{-1}|
      & = (a_3|z|^{-1})^{\beta_3(1+\theta/\gamma)}
    \end{split}
  \end{align*}
  into the expression for $F(z)$ in \eqref{eq:defg}, i.e.,
  \begin{align*}
    F(|z|\me{i\theta}) = z^{-\beta_1}G(z^{-1}) H_{2}(z^{-1})^{-1}\cdot H_{3}(z^{-1})^{-1}\,,
  \end{align*}
  implies for $-\gamma\leq\theta<0$ and $|z|>0$,
  \begin{align}
    \label{eq:boundf}
    \begin{split}
      \|F(|z|\me{i\theta})\|
      & \leq a_1 |z|^{-\beta_1}(\cos\gamma)^{-\beta_1} (a_2|z|^{-1})^{-\beta_2(1+\theta/\gamma)} \cdot (a_3|z|^{-1})^{\beta_3(1+\theta/\gamma)}\,.
    \end{split}
  \end{align}
  By a reflection along the real axis we conclude the corresponding bound
  with $\theta$ replaced by $-\theta\in[-\gamma,0)$.
  Choosing
  $\gamma=\gamma(\epsilon,\theta)=\epsilon|\theta|+(1-\epsilon)\pi/2$
  for any $0<\epsilon<1$ (which ensures $|\theta|<\gamma<\pi/2$)
  lets us estimate
  $(\cos\gamma)^{-\beta_1} \leq \epsilon^{-\beta_1} (\cos\theta)^{-\beta_1}$
  and conclude the proof of \eqref{eq:pl1}, upon taking the minimum between
  \eqref{eq:boundf} and \eqref{eq:assum1}.
\end{proof}

\begin{remarks}
  (1) For $\beta_3=0$ 
  one observes that the decay of $|F(z)|$ in the regime $a_2\gg|z|$
  becomes weaker as $|\theta|$ increases.

  (2) The choice $\gamma=\epsilon|\theta|+(1-\epsilon)\pi/2$ suppresses
  the effect of large $|\theta|$ for $\epsilon\ll1$, but does not prevent
  the vanishing of decay as $|\theta|\to\pi/2$.
\end{remarks}

\section{Application - Semigroups of fractional Schr\"odinger operators}
\label{s:application}

Let $d\in\N$, $\alpha>0$, $V\in L_\loc^1(\R^d:\R)$, and recall the
notation \eqref{eq:defhalpha} for the fractional Schr\"odinger operator
$H_\alpha=(-\Delta)^{\alpha/2}+V$ in $L^2(\R^d)$.
We assume that $V$ is such that the quadratic form of $H_\alpha$ is
non-negative on $C_c^\infty(\R^d)$ so that it gives rise to a
self-adjoint operator by Friedrichs' theorem. In particular,
$\me{-zH_\alpha}$ is a bounded, holomorphic semigroup on $L^2(\R^d)$
whenever $\re(z)>0$ \cite[p.~493]{Kato1976}.
In the following we derive certain weighted and averaged estimates for
the extension of the semigroup $\me{-tH_\alpha}$ to complex times
$z\in\C_+$ and distinguish between the cases where $V\geq0$ and where
$V$ has a non-vanishing negative part, respectively.

\subsection{Non-negative potentials}
\label{ss:posv}

If $\alpha\in(0,2)$, then
$\me{-t(-\Delta)^{\alpha/2}}(x)\sim_{d,\alpha}t^{-d/\alpha}(1+|x|/t^{1/\alpha})^{-d/\alpha}$,
i.e., the kernel is in particular positive \cite{BlumenthalGetoor1960}.
Therefore, Trotter's formula implies for $V\geq0$ the upper bound
\begin{align}
  \label{eq:heatkernelv}
  \me{-tH_\alpha}(x,y)
  \lesssim_{d,\alpha} t^{-\frac{d}{\alpha}}\left(1+\frac{|x-y|}{t^{1/\alpha}}\right)^{-d-\alpha}\,.
\end{align}
In turn, \eqref{eq:heatkernelv} implies the following weighted
$L^2\to L^\infty$ estimate for any $r,t>0$,
\begin{align}
  \label{eq:keysmallt}
  \sup_{y\in\R^d}\int\limits_{\R^d\setminus B_y(r)}\left|\exp(-tH_\alpha)(x,y)\right|^2\,dx
  \lesssim t^{-\frac d\alpha}\left(1+\frac{r}{t^{1/\alpha}}\right)^{-d-2\alpha}\,.
\end{align}
The uniform complex-time heat kernel estimates \eqref{eq:zhaozheng}
by Zhao and Zheng yield the following extension to $z\in\C_+$, i.e.,
\begin{align}
  \label{eq:keysmallz}
  \begin{split}
    & \sup_{y\in\R^d}\int\limits_{\R^d\setminus B_y(r)}\left|\exp(-zH_\alpha)(x,y)\right|^2\,dx\\
    & \quad \lesssim \me{2\mu_{\epsilon,V}|z|} \cdot (\cos\theta)^{-2d(\frac1\alpha-\frac12)\one_{\{\alpha<1\}} - 2(\frac{d}{2}+\alpha-1)\one_{\{\alpha\geq1\}}} |z|^{-\frac d\alpha}\left(1+\frac{r}{|z|^{1/\alpha}}\right)^{-d-2\alpha}\,.
  \end{split}
\end{align}

In some applications one is merely in possession of averaged
and possibly weighted analogs of \eqref{eq:heatkernelv},
such as \eqref{eq:keysmallt}.
This is typically the case when $V$ has a singular negative part,
which will be discussed in more detail in Section~\ref{ss:negv}.
The following corollary illustrates how the Phragm\'en--Lindel\"of
principle can be used to extend weighted $L^2\to L^\infty$ estimates
like \eqref{eq:keysmallt} to complex times.

\begin{corollary}
  \label{PLapplied}
  Let $\alpha\in(0,2)$ and $V\geq0$.
  Let further $z=|z|\me{i\theta}$ with $|\theta|\in[0,\pi/2)$,
  $r>0$, $\epsilon\in(0,1)$, and
  \begin{align}
    \beta_{d,\alpha,\epsilon}(\theta)
    := (d+2\alpha) \left(1-\frac{|\theta|}{\epsilon|\theta|+(1-\epsilon)\pi/2}\right) \geq0\,.
  \end{align}
  Then
  \begin{align}
    \label{eq:PLapplied0}
    \begin{split}
      \sup_{y\in\R^d}\int\limits_{\R^d\setminus B_y(r)}\left|\exp(-zH_\alpha)(x,y)\right|^2\,dx
      \lesssim_{d,\alpha,\epsilon} (|z|\cos\theta)^{-\frac{d}{\alpha}}\left(1+\frac{r}{|z|^{1/\alpha}}\right)^{-\beta_{d,\alpha,\epsilon}(\theta)}\,.
    \end{split}
  \end{align}
\end{corollary}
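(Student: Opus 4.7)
The plan is to apply Theorem~\ref{PL} with $X=L^2(\R^d)$ to the holomorphic function $F_y\colon\C_+\to L^2(\R^d)$ defined, for fixed $y\in\R^d$, by $F_y(z):=\one_{\R^d\setminus B_y(r)}\,K_z(\cdot,y)$, where $K_z$ denotes the integral kernel of $\me{-zH_\alpha}$. All intermediate bounds will be translation-invariant in $y$, so that taking the supremum in $y$ at the end produces the claimed estimate.

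To verify assumption \eqref{eq:assum1}, one should \emph{not} invoke Theorem~\ref{zhaozheng}, whose constant contains an unwanted $\me{\mu|z|}$ factor. Instead, since $H_\alpha\geq 0$ is self-adjoint, $\|\me{-zH_\alpha}\|_{2\to 2}\leq 1$ for $\re(z)>0$, and the semigroup identity $\me{-\bar z H_\alpha}\me{-zH_\alpha}=\me{-2\re z\,H_\alpha}$ gives
\begin{equation*}
  \|F_y(z)\|_{L^2}^2\leq\|K_z(\cdot,y)\|_{L^2}^2=K_{2\re z}(y,y)\lesssim(\re z)^{-d/\alpha}=(|z|\cos\theta)^{-d/\alpha},
\end{equation*}
where the on-diagonal bound uses \eqref{eq:heatkernelv} at $x=y$. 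This is \eqref{eq:assum1} with $\beta_1=d/(2\alpha)$.

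For the real-axis bound \eqref{eq:assum2} I would apply \eqref{eq:keysmallt} with $t=|z|$ to obtain $\|F_y(|z|)\|_{L^2}^2\lesssim|z|^{-d/\alpha}(1+r/|z|^{1/\alpha})^{-d-2\alpha}$. Choosing $a_2:=r^\alpha$, $\beta_2:=(d+2\alpha)/(2\alpha)$, and $\beta_3:=0$, a short case split between $|z|\leq r^\alpha$ (where $1+r/|z|^{1/\alpha}\sim r/|z|^{1/\alpha}$) and $|z|\geq r^\alpha$ (where the right-hand side of \eqref{eq:assum2} exceeds~$1$) shows that \eqref{eq:assum2} holds in the form $\|F_y(|z|)\|_{L^2}\lesssim |z|^{-d/(2\alpha)}(r^\alpha/|z|)^{-(d+2\alpha)/(2\alpha)}$.

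Feeding these parameters into \eqref{eq:pl1}, squaring, and using the algebraic identity $(r^\alpha/|z|)^{-c}=(r/|z|^{1/\alpha})^{-c\alpha}$ together with $\min\{1,u^{-c}\}\sim (1+u)^{-c}$ for $u\geq 0$ produces the claimed bound with exponent $\beta_{d,\alpha,\epsilon}(\theta)=(d+2\alpha)(1-|\theta|/\gamma(\epsilon,\theta))$. The only genuine obstacle is the translation between the artificial scale $r^\alpha/|z|$ forced on us by the Phragm\'en--Lindel\"of interpolation (through the single parameter $a_2$ in Theorem~\ref{PL}) and the intrinsic spatial scale $r/|z|^{1/\alpha}$ of the operator $(-\Delta)^{\alpha/2}$; once this bookkeeping is sorted out, everything else follows immediately.
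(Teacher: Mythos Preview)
Your proposal is correct and follows essentially the same route as the paper: both arguments apply Theorem~\ref{PL} with $a_2=r^\alpha$, $\beta_2$ proportional to $(d+2\alpha)/\alpha$, and $\beta_3=0$, using \eqref{eq:keysmallt} on the real axis and the on-diagonal smoothing bound for general $\theta$. The only cosmetic difference is that the paper takes $X=\C$ by pairing $K_z(\cdot,y)$ with a normalized test function $f$ supported in $\R^d\setminus B_y(r)$ (and squares), whereas you work directly with $X=L^2(\R^d)$; since weak holomorphy implies strong holomorphy for Banach-space-valued functions, the two formulations are equivalent, and the paper's dual-pairing device is precisely the standard way to verify the $L^2$-holomorphy you assert.
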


Although the singularity of $(\cos\theta)^{-d/\alpha}$ in
\eqref{eq:PLapplied0} is less severe than in \eqref{eq:keysmallz},
the decay in the region $r\gg|z|^{1/\alpha}$ becomes weaker as
$|\theta|$ increases.

\begin{proof}
  For $y\in\R^d$ define the analytic function $F_y:\C_+\to\C$ by
  \begin{align*}
    F_y(z) = \left(\int_{\R^d}\me{-zH_\alpha}(x,y)f(x)\,dx\right)^2
  \end{align*}
  for any normalized $f\in L^2(\R^d)$ with
  $\supp f\subseteq \R^d\setminus B_y(r)$.
  For $|\theta|<\frac\pi2$ and $z=|z|\me{i\theta}$, we have, using
  the unitarity of $\me{-itH_\alpha}$ and
  \eqref{eq:heatkernelv} with $t$ replaced by $|z|\cos\theta$,
  \begin{align*}
    \begin{split}
      \sup_{y\in\R^d}|F_y(z)|
      & \leq \|\me{-zH_\alpha}\|_{2\to\infty}^2
      \leq \|\me{-|z|\cos\theta H_\alpha}\|_{2\to\infty}^2\|\me{-i|z|\sin\theta H_\alpha}\|_{2\to2}^2\\
      & \leq C_{1,d,\alpha} (|z|\cos\theta)^{-\frac{d}{\alpha}}\,.
    \end{split}
  \end{align*}
  On the other hand we have for $\theta=0$,
  \begin{align*}
    \sup_{y\in\R^d}|F_y(|z|)|
    & \leq \int_{\R^d\setminus B_y(r)}\left|\me{-|z|H_\alpha}(x,y)\right|^2\,dx
      \leq C_{2,d,\alpha} |z|^{-d/\alpha}\left(1+\frac{r}{|z|^{1/\alpha}}\right)^{-d-2\alpha}\\
    & \leq C_{2,d,\alpha} |z|^{-d/\alpha}\left(\frac{r^\alpha}{|z|}\right)^{-\frac{d}{\alpha}-2}
  \end{align*}
  by \eqref{eq:keysmallt}.
  Thus, we may apply Theorem~\ref{PL} with $X=\C$, 
  $a_1=\max\{C_{1,d,\alpha},C_{2,d,\alpha}\}$,
  $a_2=r^\alpha$,
  $\beta_1=d/\alpha$, 
  $\beta_2=(d+2\alpha)/\alpha$,
  and $\beta_3=0$, and obtain
  \begin{align*}
    |F_y(z)|
    \lesssim_{d,\alpha,\epsilon} (|z|\cos\theta)^{-\frac d\alpha}\left[1\wedge \left(\frac{r^\alpha}{|z|}\right)^{-\frac{d+2\alpha}{\alpha}\left(1-\frac{|\theta|}{\epsilon|\theta|+(1-\epsilon)\pi/2}\right)}\right]\,,
  \end{align*}
  which shows \eqref{eq:PLapplied0}.
\end{proof}

Similarly, one can use Theorem~\ref{PL} to derive pointwise estimates.

\begin{corollary}
  \label{PLapplied2}
  Let $\alpha\in(0,2)$ and $V\geq0$.
  Let further $z=|z|\me{i\theta}$ with $|\theta|\in[0,\pi/2)$,
  $x,y\in\R^d$, $\epsilon\in(0,1)$, and
  \begin{align}
    \label{eq:PLapplied2defbeta}
    \beta_{d,\alpha,\epsilon}(\theta)
    := (d+\alpha) \left(1-\frac{|\theta|}{\epsilon|\theta|+(1-\epsilon)\pi/2}\right) \geq0\,.
  \end{align}
  Then
  \begin{align}
    \label{eq:PLapplied2}
    \begin{split}
      \left|\me{-zH_\alpha}(x,y)\right|
      \lesssim_{d,\alpha,\epsilon} (|z|\cos\theta)^{-\frac{d}{\alpha}}\left(1+\frac{|x-y|}{|z|^{1/\alpha}}\right)^{-\beta_{d,\alpha,\epsilon}(\theta)}\,.
    \end{split}
  \end{align}
\end{corollary}

\begin{proof}
  For $z=|z|\me{i\theta}$ with $|\theta|\in[0,\pi/2)$
  we use $\|\me{-i\im(z)H_\alpha}\|_{2\to2}=1$ and estimate
  \begin{align*}
    \|\me{-zH_\alpha}\|_{1\to\infty}
    \leq \|\me{-\re(z)H_\alpha/2}\|_{1\to2}\|\me{-\re(z)H_\alpha/2}\|_{2\to\infty}
    \leq C_{1,d,\alpha} (|z|\cos\theta)^{-\frac d\alpha}\,,
  \end{align*}
  which shows
  \begin{align*}
    |\me{-zH_\alpha}(x,y)|
    \leq C_{1,d,\alpha}(|z|\cos\theta)^{-\frac d\alpha}\,,
    \quad x,y\in\R^d\,.
  \end{align*}
  On the other hand, \eqref{eq:heatkernelv} also implies
  \begin{align*}
    |\me{-|z|H_\alpha}(x,y)|
    \leq C_{2,d,\alpha}|z|^{-\frac d\alpha}\left(\frac{|x-y|^\alpha}{|z|}\right)^{-\frac{d+\alpha}{\alpha}}\,,
    \quad x,y\in\R^d\,.
  \end{align*}
  Thus, by Theorem~\ref{PL} with $X=\C$, 
  $a_1=\max\{C_{1,d,\alpha},C_{2,d,\alpha}\}$,
  $a_2=|x-y|^\alpha$,
  $\beta_1=d/\alpha$, 
  $\beta_2=(d+\alpha)/\alpha$,
  and $\beta_3=0$, we obtain
  \begin{align*}
    |\me{-zH_\alpha}(x,y)|
    \lesssim_{d,\alpha,\epsilon} (|z|\cos\theta)^{-\frac d\alpha}\left[1 \wedge \left(\frac{|x-y|^\alpha}{|z|}\right)^{-\frac{d+\alpha}{\alpha}\left(1-\frac{|\theta|}{\epsilon|\theta|+(1-\epsilon)\pi/2}\right)}\right]
  \end{align*}
  for all $x,y\in\R^d$. This shows \eqref{eq:PLapplied2}.
\end{proof}

\subsection{Potentials with a negative part}
\label{ss:negv}

We now consider the situation where the semigroup $\me{-tH_\alpha}$
is not necessarily $L^2\to L^\infty$ bounded anymore. This typically
occurs when $V$ has a non-vanishing, singular negative part
\cite{Liskevichetal2002,MilmanSemenov2004}.
In these cases $\me{-tH_\alpha}$ may nevertheless satisfy certain
weighted $L^p\to L^q$ estimates with $1<p\leq q<\infty$.
To introduce the estimates that we discuss here, we denote by
\begin{align*}
  A_2(x,r,k) := B_x(2^{k}r)\setminus B_x(2^{k-1} r)
  \quad \text{with} \quad
  A_2(x,r,0) := B_x(r)
  \quad \text{and} \quad
  k\in\N_0
\end{align*}
dyadic annuli around $x\in\R^d$.

\begin{definition}
  \label{gge}
  Let $r>0$, $(T_r)_{r>0}$ be a family of linear bounded operators
  on $L^2(\R^d)$,
  $1\leq p \leq q\leq\infty$,
  $\beta,\sigma>0$,
  and $g:\R_+\to\R_+$ satisfy
  $g(\lambda)\sim_\beta(1+\lambda)^{-\beta}$.
  Then $T_r$ is said to satisfy the \emph{dyadic $(p,q,\sigma)$
    Davies--Gaffney estimate} if there is a finite constant
  $C_{\DG}=C_{\DG}(d,p,q,\beta,\sigma)>0$ such that
  \begin{align}
    \label{eq:gge3}
    \|\one_{B_x(r)}T_r\one_{A_2(x,r,k)}\|_{p\to q}
    \leq C_{\DG}\, r^{-d\left(\frac1p-\frac1q\right)} g(2^k)2^{\frac{kd}{\sigma}}\,,
    \quad x\in\R^d,k\in\N_0\,,
  \end{align}
  and if $\beta>d(1/\sigma+1/q)$.
\end{definition}

This definition is inspired by Davies \cite{Davies1995} and the
notion of generalized gaussian estimates introduced by
Blunck and Kunstmann, cf.~\cite[p.~920]{BlunckKunstmann2003}.
Heuristically, the projection onto $B_x(r)$ captures the singularity
of $T_r$ at $x$, whereas the projection onto $A_2(x,r,k)$ controls its
decay at distance $2^kr$.
We use dyadic annuli instead of annuli with constant thickness, since
they allow to exploit decay more effectively.
Similar estimates were, e.g., used by Schreieck and Voigt
\cite{SchreieckVoigt1994} on $L^p$ independence of the spectrum of
Schr\"odinger operators with form small potentials, and later
systematically studied and exploited in a series of works by Blunck and Kunstmann
\cite{BlunckKunstmann2002,BlunckKunstmann2003,Blunck2003,BlunckKunstmann2004,BlunckKunstmann2005,Blunck2007}
on spectral multiplier theorems for operators whose semigroups need not
have a bounded kernel but obey the mentioned generalized gaussian
estimates.

Estimate \eqref{eq:gge3} with $p=1$, $q=\infty$, and the relaxed
assumption $\beta\geq d/\sigma$ is equivalent to a pointwise estimate
for the kernel of $T_r$.

\begin{proposition}
  Suppose $r,\beta,\sigma>0$, $\beta\geq d/\sigma$, and $(T_r)_{r>0}$
  is a family of linear operators in $\mathcal{B}(L^1\to L^\infty)$
  with integral kernel $T_r(x,y)$. Then the following statements are
  equivalent.

  (1) $T_r$ satisfies \eqref{eq:gge3} with $p=1$, $q=\infty$, and
  $C_{\DG}$ replaced by $c_{\beta,d,\sigma} C_{\DG}$ for some
  $c_{\beta,d,\sigma}>0$.

  (2) One has
  $|T_r(x,y)|\lesssim_{\beta,d,\sigma} C_{\DG}\, r^{-d}(1+|x-y|/r)^{-(\beta-d/\sigma)}$
  for all $x,y\in\R^d$.
\end{proposition}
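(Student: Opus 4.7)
The proof rests on the standard identification: since each $T_r$ is an integral operator in $\mathcal{B}(L^1\to L^\infty)$, for any measurable $A,B\subseteq\R^d$ one has
\[
\|\one_A T_r \one_B\|_{1\to\infty} \;=\; \operatorname*{ess\,sup}_{x\in A,\,y\in B}|T_r(x,y)|,
\]
so both implications reduce to comparing essential suprema of $|T_r(x,y)|$ over product sets $B_x(r)\times A_2(x,r,k)$. I will also use the elementary comparability
\[
g(2^k)\,2^{kd/\sigma} \;\sim_{\beta,d,\sigma}\; (1+2^k)^{-(\beta-d/\sigma)},\qquad k\in\N_0,
\]
which follows from $g(\lambda)\sim(1+\lambda)^{-\beta}$ and makes sense thanks to $\beta\geq d/\sigma$.

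For the implication $(1)\Rightarrow(2)$, I would fix $x,y\in\R^d$, set $\rho:=|x-y|$, and take the unique $k\in\N_0$ with $y\in A_2(x,r,k)$: namely $k=0$ when $\rho<r$, and $k$ determined by $2^{k-1}r\leq \rho<2^k r$ otherwise. Since trivially $x\in B_x(r)$, assumption (1) applied at center $x$ and index $k$ gives $|T_r(x,y)|\lesssim C_{\DG}\,r^{-d}\,g(2^k)\,2^{kd/\sigma}$, and the conclusion (2) follows from $1+2^k\sim 1+\rho/r$ together with the comparability above.

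For the reverse implication $(2)\Rightarrow(1)$, I would estimate $|T_r(x',y')|$ for arbitrary $x'\in B_x(r)$ and $y'\in A_2(x,r,k)$ by splitting into two ranges. For $k\geq 2$ the triangle inequality gives $|x'-y'|\geq |x-y'|-|x-x'|\geq 2^{k-1}r-r\geq 2^{k-2}r$, so (2) produces a decay factor $(1+|x'-y'|/r)^{-(\beta-d/\sigma)}\lesssim (1+2^k)^{-(\beta-d/\sigma)}\sim g(2^k)\,2^{kd/\sigma}$, matching the right-hand side of \eqref{eq:gge3}. For $k\in\{0,1\}$ the factor $g(2^k)\,2^{kd/\sigma}$ is just a positive constant depending only on $\beta,d,\sigma$, so the trivial pointwise bound $|T_r(x',y')|\lesssim C_{\DG}\,r^{-d}$ coming from (2) already suffices, absorbed into the prefactor $c_{\beta,d,\sigma}$.

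The only delicate point I anticipate is handling $k\in\{0,1\}$ in the direction $(2)\Rightarrow(1)$: there $x'$ and $y'$ can be arbitrarily close despite lying in formally disjoint dyadic shells, so no genuine $|x'-y'|/r$-decay can be extracted and one must rely on the trivial bound together with the finiteness of the two resulting constants. The hypothesis $\beta\geq d/\sigma$ is precisely what ensures $(1+|x'-y'|/r)^{-(\beta-d/\sigma)}\leq 1$ and legitimises the comparability used throughout; no other substantive obstacle is expected.
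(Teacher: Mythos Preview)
Your proposal is correct and follows essentially the same route as the paper: both directions rest on the identification of the $L^1\to L^\infty$ norm with the essential supremum of the kernel, then in $(1)\Rightarrow(2)$ one fixes the center at $x$ and picks the dyadic index $k$ determined by $|x-y|/r$, while in $(2)\Rightarrow(1)$ one uses the triangle inequality to get $|x'-y'|\gtrsim 2^k r$ for $k\geq2$ and the trivial bound for $k\in\{0,1\}$. The paper's write-up is slightly terser (it packages the small-$k$ case via the Heaviside factor $\theta(k-2)$), but the content is the same.
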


\begin{proof}
  (1) $\Rightarrow$ (2): In this case \eqref{eq:gge3} asserts
  \begin{align*}
    \sup_{y,z\in\R^d}\one_{B_x(r)}(y)|T_r(y,z)|\one_{A_2(x,r,k)}(z)
    & \leq c_{\beta,d,\sigma} C_{\DG}\, r^{-d}g(2^k)2^{kd/\sigma}\\
    & \lesssim_{\beta,d,\sigma} C_{\DG}\, r^{-d}2^{-k(\beta-d/\sigma)}
  \end{align*}
  for all $x\in\R^d$ and $k\in\N_0$. Choosing $y=x$ and $k\in\N_0$ such that
  $2^k=\max\{1,|x-z|/r\}$ yields
  $|T_r(x,z)| \lesssim_{\beta,d,\sigma} C_{\DG}\, r^{-d}(1+|x-z|/r)^{-(\beta-d/\sigma)}$
  for all $x,z\in\R^d$.

  (2) $\Rightarrow$ (1): We estimate
  \begin{align*}
    \|\one_{B_x(r)}T_r\one_{A_2(x,r,k)}\|_{1\to \infty}
    & = \sup_{y,z\in\R^d}\one_{B_x(r)}(y)|T_r(y,z)|\one_{A_2(x,r,k)}(z)\\
    & \leq \sup_{y\in B_x(r)}\sup_{z\in \R^d\setminus B_x(2^{k-1}r)} |T_r(y,z)|\\
    & \lesssim_{\beta,d,\sigma} C_{\DG}\, r^{-d} (1+2^k \theta(k-2))^{-(\beta-d/\sigma)}\,,
  \end{align*}
  which concludes the proof.
\end{proof}

Further consequences of Definition \ref{gge} will be discussed in 
Section~\ref{ss:consequencesgge}.
They play an important role in the subsequent analysis.

\medskip
The example of the semigroup of $H_\alpha$ with the Hardy potential
$V=a|x|^{-\alpha}$  illustrates that \eqref{eq:gge3} is a reasonable
assumption.
Note that $|x|^{-\alpha}\notin K_\alpha(\R^d)$. The resulting operator
\begin{align}
  \label{eq:genhardy}
  \cl_{a,\alpha} := (-\Delta)^{\alpha/2} + \frac{a}{|x|^\alpha}
  \quad \text{in}\ L^2(\R^d)
\end{align}
for $\alpha\in(0,2\wedge d)$ and $a\geq a_*\equiv a_*(\alpha,d)>-\infty$
is sometimes called generalized or fractional Hardy operator.
By the sharp Hardy--Kato--Herbst inequality, $\cl_{a,\alpha}$ is bounded
from below and non-negative in the sense of quadratic forms if and only if
$a\geq a_*$. We refer to Kato
\cite[Chapter 5, Equation (5.33)]{Kato1976} (for $\alpha=1$ and $d=3$)
and Herbst \cite[Equation (2.6)]{Herbst1977} and also
\cite{Franketal2008H,FrankSeiringer2008,Kovalenkoetal1981,Yafaev1999}
for proofs of this fact and the explicit expression of $a_*$.

For $a\in[a_*,0]$ Bogdan et al.~\cite{Bogdanetal2019}
proved pointwise heat kernel bounds, namely
\begin{align}
  \label{eq:genhardyheat}
  \me{-t\cl_{a,\alpha}}(x,y)
  \sim_{d,\alpha,a} \big(1\vee\frac{t^{\frac1\alpha}}{|x|}\big)^\delta \big(1\vee\frac{t^{\frac1\alpha}}{|y|}\big)^\delta \frac{t}{(t^{\frac1\alpha}+|x-y|)^{d+\alpha}}\,,
  \quad t\!>\!0,\,x,y\in\R^d\setminus\{0\}\,,
\end{align}
where $\delta=\delta(a,d,\alpha)\in[0,(d-\alpha)/2]$ satisfies
$\delta(0,d,\alpha)=0$, $\delta(a_*,d,\alpha)=(d-\alpha)/2$,
and increases monotonously as $a$ decreases. An explicit formula
for $\delta(a,d,\alpha)$ is, e.g., contained in \cite{Bogdanetal2019}
or Frank et al.~\cite{Franketal2008H}.
In particular, $\me{-t\cl_{a,\alpha}}$ is $L^p\to L^p$ bounded for $a<0$
if and only if $p\in(d/(d-\delta),d/\delta)$. This follows from
\eqref{eq:genhardyheat}, duality, and
$(\me{-\cl_{a,\alpha}}\one_{B_0(1)})(x)\gtrsim|x|^{-\delta}$.
Moreover, $\me{-t\cl_{a,\alpha}}$ satisfies the $L^2\to L^\infty$
estimates in \eqref{eq:keysmallt} if and only if $a\geq0$.

\begin{example}
  \label{application}
  Let $\alpha\in(0,2\wedge d)$, $t>0$, and $r_t:=t^{1/\alpha}$.

  (1) If $a\in[a_*,0)$ and $p\in(d/(d-\delta),2]$, then
  $T_{r_t}:=\me{-t\cl_{a,\alpha}}$ satisfies the dyadic $(p,p',p')$,
  $(p,2,p')$, and $(2,p',2)$ Davies--Gaffney estimate \eqref{eq:gge3}
  with $g(\lambda)\sim_{d,\alpha}(1+\lambda)^{-d-\alpha}$.

  (2) If $a\geq0$ and $1\leq p\leq2\leq q\leq\infty$,
  then $T_{r_t}:=\me{-t\cl_{a,\alpha}}$ satisfies the dyadic $(p,q,p')$
  Davies--Gaffney estimate \eqref{eq:gge3} with
  $g(\lambda)\sim_{d,\alpha}(1+\lambda)^{-d-\alpha}$.
\end{example}

\begin{proof}  
  (1) By H\"older's inequality and \eqref{eq:genhardyheat}, one
  obtains for $x\in\R^d$ and $k\in\N\setminus\{1\}$
  \begin{align*}
    & \sup_{f\in L^p:\,\|f\|_p=1}\|\one_{B_{x}(r_t)}\me{-t\cl_{a,\alpha}}\one_{A_2(x,r_t,k)}f\|_{p'}^{p'}\\
    & \quad \leq \int\limits_{\frac{|z-x|}{r_t}\leq1} dz\, \left(1\vee\frac{r_t}{|z|}\right)^{\delta p'} \int\limits_{\frac{|y-x|}{r_t}\in[2^{k-1},2^k]} dy\, \left(1\vee\frac{r_t}{|y|}\right)^{\delta p'}\frac{r_t^{\alpha p'}}{(r_t+|z-y|)^{p'(d+\alpha)}}\\
    & \quad = r_t^{-d(p'-2)}(1+2^k)^{-(d+\alpha)p'} \cdot 2^{kd}
  \end{align*}
  where we used $|z-y| \geq |y-x|-r_t\geq(2^{k-1}-1)r_t$.
  For $k\in\{0,1\}$ it suffices to integrate $y$ over $|y-x|\leq2r_t$,
  in which case the left side is bounded by a constant times
  $r_t^{-d(p'-2)}\lesssim r_t^{-d(p'-2)}(1+2^k)^{-(d+\alpha)p'} \cdot 2^{kd}$
  as well. Taking the $p'$-th root yields the dyadic $(p,p',p')$
  Davies--Gaffney estimate \eqref{eq:gge3}.
  The $(p,2,p')$ and $(2,p',2)$ Davies--Gaffney estimates follow
  similarly. For $x\in\R^d$ and $k\in\N_0$, one has
  \begin{align*}
    & \sup_{f\in L^2:\,\|f\|_2=1}\|\one_{B_{x}(r_t)}\me{-t\cl_{a,\alpha}}\one_{A_2(x,r_t,k)}f\|_{p'}^{p'}\\
    & \quad \leq \int\limits_{\frac{|z-x|}{r_t}\leq1} dz\, \left(1\vee\frac{r_t}{|z|}\right)^{\delta p'} \left[\int\limits_{\frac{|y-x|}{r_t}\in[2^{k-1},2^k]} \! dy\, \left(1\vee\frac{r_t}{|y|}\right)^{2\delta} \frac{r_t^{2\alpha}}{(r_t+|z-y|)^{2(d+\alpha)}}\right]^{\frac{p'}{2}}\\
    & \quad \lesssim r_t^{-d(p'-2)/2}(1+2^k)^{-(d+\alpha)p'} \cdot 2^{kdp'/2}
  \end{align*}
  which shows the $(2,p',2)$ estimate. The $(p,2,p')$ estimate is
  shown analogously.

  (2) By \eqref{eq:heatkernelv} (Trotter's formula) and a similar
  computation, one obtains
  \begin{align*}
    \begin{split}
      & \|\one_{B_{x}(r_t)}\me{-t\cl_{a,\alpha}}\one_{A_2(x,r_t,k)}\|_{p\to q}
      \lesssim r_t^{-d(\frac1p-\frac1q)}\left(1+2^k\right)^{-(d+\alpha)}\cdot 2^{\frac{kd}{p'}}
    \end{split}
  \end{align*}
  for all $x\in\R^d$ and $k\in\N_0$.
\end{proof}

\begin{remark}
  In fact, \eqref{eq:genhardyheat} allows to derive pointwise
  estimates for $\me{-z\cl_{a,\alpha}}$ using Davies' method,
  cf.~\cite[Theorem~A]{MilmanSemenov2004} and Bui and D'Ancona
  \cite[Proposition~3.4]{BuiDAncona2021} when $|\arg(z)|<\pi/4$.
\end{remark}

We now use Theorem~\ref{PL} to extend dyadic Davies--Gaffney
estimates \eqref{eq:gge3} for $\me{-tH_\alpha}$ and $t>0$ to
complex times $z\in\C_+$.

\begin{theorem}
  \label{plgge}
  Let $\alpha>0$, $\beta>0$, $1\leq p\leq2\leq q\leq\infty$,
  and $\sigma,t>0$.
  Suppose $\me{-tH_\alpha}$ satisfies
  the dyadic $(p,q,\sigma)$ Davies--Gaffney estimate \eqref{eq:gge3}
  (Definition \ref{gge}) with $r\equiv r_t:=t^{1/\alpha}$, i.e., there
  is a constant $C_{\DG}>0$ such that
  \begin{align}
    \label{eq:assumpl}
    \|\one_{B_x(r_t)}\me{-tH_\alpha}\one_{A_2(x,r_t,k)}\|_{p\to q}
    \lesssim_\beta C_{\DG}\, r_t^{-d\left(\frac{1}{p}-\frac{1}{q}\right)} 2^{-k(\beta-\frac{d}{\sigma})}\,,
    \quad x\in\R^d\,,k\in\N_0\,.
  \end{align}
  In case $p\in[1,2)$ and $q\in(2,\infty]$, and $q\neq p'$,
  assume additionally the bounds
  $\max\{\|\me{-tH_\alpha}\|_{p\to p},\|\me{-tH_\alpha}\|_{q\to q}\}\lesssim_{d,p,q,\alpha,\beta,\sigma}1$.
  Then, for $z=|z|\me{i\theta}\in\C_+$, $\zeta\geq0$,
  $r_z:=|z|^{1/\alpha}(\cos\theta)^{-\zeta}$, $\epsilon\in(0,1)$, and
  \begin{align}
    \label{eq:deftildebeta}
    \begin{split}
      \tilde\beta & \equiv \tilde\beta_{d,\beta,\sigma,\epsilon}(\theta)
      := \left(\beta-\frac{d}{\sigma}\right) \cdot \left(1-\frac{|\theta|}{\epsilon|\theta|+(1-\epsilon)\pi/2}\right)
      \geq 0\,,
    \end{split}
  \end{align}
  one has 
  \begin{align}
    \label{eq:plgge}
    \begin{split}
      & \|\one_{B_x(r_z)}\me{-zH_\alpha}\one_{A_2(x,r_z,k)}\|_{p\to q}\\
      & \quad \lesssim_{d,\alpha,\beta,\sigma,p,q,\epsilon} C_{\DG}\, (|z|\cos\theta)^{-\frac{d}{\alpha}\left(\frac{1}{p}-\frac{1}{q}\right)} \cdot (\cos\theta)^{-\frac{d\zeta}{q}} \cdot 2^{-k\tilde\beta} \\
      & \quad = C_{\DG}\, r_z^{-d\left(\frac{1}{p}-\frac{1}{q}\right)}(\cos\theta)^{-d\left(\zeta+\frac{1}{\alpha}\right)\left(\frac{1}{p}-\frac{1}{q}\right) -\frac{d\zeta}{q}} \cdot 2^{-k\tilde\beta}
    \end{split}
  \end{align}
  for all $x\in\R^d$ and $k\in\N_0$.
  Moreover,
  \begin{align}
    \label{eq:plggeintegrated}
    \|\me{-zH_\alpha}\|_{p\to q}
    \lesssim_{d,\alpha,p,q,\beta,\sigma} C_{\DG}(|z|\cos\theta)^{-\frac{d}{\alpha}\left(\frac{1}{p}-\frac{1}{q}\right)}\,.
  \end{align}
\end{theorem}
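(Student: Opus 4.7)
The plan is to apply Theorem \ref{PL} to the scalar holomorphic function
\[
F(z) := \langle g, \me{-zH_\alpha} f \rangle, \qquad z \in \C_+,
\]
where $f \in L^p(\R^d)$ and $g \in L^{q'}(\R^d)$ are normalized with $\supp f \subseteq A_2(x,R,k)$ and $\supp g \subseteq B_x(R)$, for arbitrary fixed $x \in \R^d$, $k \in \N_0$, and a radius $R > 0$ that I shall ultimately take equal to $r_z$. The target parameters will be $X = \C$, $\beta_1 := (d/\alpha)(1/p-1/q)$, $\beta_2 := \beta - d/\sigma$, $\beta_3 := 0$, and $a_2 := R^\alpha \cdot 2^k$.

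The first task is the basic angular bound
\[
\|\me{-zH_\alpha}\|_{p \to q} \lesssim (|z|\cos\theta)^{-\beta_1}, \quad z = |z|\me{i\theta} \in \C_+,
\]
which furnishes hypothesis \eqref{eq:assum1} via $|F(z)| \le \|\me{-zH_\alpha}\|_{p\to q}$ and, once the supremum over $f,g$ is taken at the end, also yields \eqref{eq:plggeintegrated}. Its real-time counterpart $\|\me{-tH_\alpha}\|_{p\to q} \lesssim t^{-\beta_1}$ follows from summing \eqref{eq:assumpl} over $k$, the convergence being guaranteed by $\beta > d(1/\sigma + 1/q)$ in Definition \ref{gge}. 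To lift this to $\C_+$ I would distinguish cases: if $p=2$ or $q=2$, factor $\me{-zH_\alpha} = \me{-\re(z) H_\alpha}\me{-i\im(z) H_\alpha}$ and invoke $\|\me{-i\im(z) H_\alpha}\|_{2\to2}=1$; if $q=p'$, apply the $TT^*$ identity with $T = \me{-zH_\alpha/2}$ together with self-adjointness of $H_\alpha$ to reduce $\|\me{-zH_\alpha/2}\|_{p\to 2}^2$ to $\|\me{-\re(z) H_\alpha}\|_{p\to p'}$; and if $p<2<q$ with $q\neq p'$, use the additional uniform $L^p\to L^p$ and $L^q\to L^q$ hypothesis together with Riesz--Thorin interpolation against $\|\me{-tH_\alpha}\|_{2\to 2}\le 1$ to upgrade the DG estimate to $L^p\to L^2$ and $L^2\to L^q$ bounds that then compose.

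The second task is hypothesis \eqref{eq:assum2}. At the matched scale $t=R^\alpha$, \eqref{eq:assumpl} supplies
\[
|F(R^\alpha)| \le C_{\DG}\, (R^\alpha)^{-\beta_1} \left(\frac{a_2}{R^\alpha}\right)^{-\beta_2},
\]
which is precisely the equality case of \eqref{eq:assum2} there. For $t \neq R^\alpha$, I would partition $B_x(R)$ and $A_2(x,R,k)$ into balls of the natural scale $r_t = t^{1/\alpha}$ and apply \eqref{eq:assumpl} to each pair of covering balls with the shifted dyadic index $k' \approx k + \log_2(R/r_t)$ encoding the relative scale separation. Summation via Minkowski's inequality (using the disjointness of the pieces) and accounting for the covering cardinalities $(R/r_t)^d$ and $(2^k R/r_t)^d$ then produces a uniform-in-$t$ bound on $|F(t)|$ compatible with \eqref{eq:assum2}; the polynomial slack this covering introduces is the source of the $(\cos\theta)^{-d\zeta/q}$ prefactor in \eqref{eq:plgge}.

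With both hypotheses in hand, Theorem \ref{PL} with $\beta_3=0$ yields
\[
|F(|z|\me{i\theta})| \lesssim C_{\DG}\, (|z|\cos\theta)^{-\beta_1} \cdot 2^{-k\tilde\beta},
\]
for $\tilde\beta$ as in \eqref{eq:deftildebeta}, where the $a_2$-contribution decomposes as $(R^\alpha 2^k/|z|)^{-\beta_2(1-|\theta|/\gamma)} = 2^{-k\tilde\beta}(\cos\theta)^{\alpha\zeta\tilde\beta}$ and the second factor is absorbed since it is $\le 1$. Taking the supremum over $f,g$ and choosing $R = r_z = |z|^{1/\alpha}(\cos\theta)^{-\zeta}$ gives both equivalent forms of \eqref{eq:plgge} after converting $|z|^{-(d/\alpha)(1/p-1/q)}$ into $r_z^{-d(1/p-1/q)}$, while \eqref{eq:plggeintegrated} is exactly the angular bound established in the first task. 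The main obstacle will be the second task: securing the uniform-in-$t$ validity of \eqref{eq:assum2}, since for $t \ll R^\alpha$ the mismatch between the fixed spatial scale $R$ and the natural scale $r_t$ of the operator forces the delicate covering analysis above to be balanced so that the covering cardinalities do not destroy the dyadic decay $2^{-k'(\beta-d/\sigma)}$.
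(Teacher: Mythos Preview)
Your overall strategy matches the paper's, and your first task is essentially right (modulo a small imprecision: in the case $p<2<q$, $q\neq p'$, interpolating $L^p\!\to\!L^p$ against $L^2\!\to\!L^2$ only gives $L^r\!\to\!L^r$; you need to interpolate the $L^p\!\to\!L^q$ bound coming from Proposition~\ref{hypercontractive} against the assumed $L^p\!\to\!L^p$ and $L^q\!\to\!L^q$ bounds to obtain $L^p\!\to\!L^2$ and $L^2\!\to\!L^q$).

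The genuine gap is in your second task. When you cover $B_x(R)$ by $\sim(R/r_t)^d$ balls of radius $r_t$ and then take the $\ell^q$-sum, you inevitably produce the factor $(1+R/r_t)^{d/q}$; this is exactly what Proposition~\ref{eqgge} records. For $t<R^\alpha$ this factor blows up as $t\to 0$, so there is no constant $a_1$ for which your $F(t)=\langle g,\me{-tH_\alpha}f\rangle$ satisfies $|F(t)|\le a_1\,t^{-\beta_1}(a_2/t)^{-\beta_2}$ for \emph{all} $t>0$, which Theorem~\ref{PL} requires. Nor can you place it in the $\beta_3$-slot of Theorem~\ref{PL}, since $(1+R/r_t)^{d/q}$ is $\sim 1$ for large $t$ and hence not of the form $(a_3/t)^{\beta_3}$. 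The paper's remedy is to absorb this factor analytically into $F$: it works with
\[
F(z)\ :=\ c^{-1}\,\Bigl(1+\frac{|B|^{\alpha/d}}{z}\Bigr)^{-\frac{d}{\alpha q}}\,\one_{B}\me{-zH_\alpha}\one_{A_2^{(k)}},
\]
whose prefactor is holomorphic on $\C_+$, has modulus $\ge 1$ on $\C_+$ (so \eqref{eq:assum1} still holds), and exactly cancels $(1+R^\alpha/t)^{d/(\alpha q)}$ on the positive real axis. After Theorem~\ref{PL} one multiplies back by $|1+|B|^{\alpha/d}/z|^{d/(\alpha q)}\lesssim(1+r_z^d/|z|^{d/\alpha})^{1/q}=(\cos\theta)^{-d\zeta/q}$ upon choosing $B=B_x(r_z)$. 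That is the actual origin of the $(\cos\theta)^{-d\zeta/q}$ factor in \eqref{eq:plgge}, not a ``polynomial slack'' that can somehow be carried through Phragm\'en--Lindel\"of as a constant.
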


\begin{remarks}
  (1) Example \ref{application} and \eqref{eq:zhaozheng} indicate
  that one will have $\beta=d+\alpha$ in many scenarios.
  Nevertheless, we prefer to keep $\beta$ as a free parameter here
  and in the following to illustrate that certain estimates for
  $\me{-tH_\alpha}$ can be extended to complex times under less
  severe decay conditions.
  
  (2) We make some remarks on the choices $r_t=t^{\frac1\alpha}$ and
  $r_z=|z|^{\frac1\alpha}(\cos\theta)^{-\zeta}$. The power $\alpha^{-1}$
  reflects the scaling relation between time and space in
  $\me{-zH_\alpha}$ and
  is dictated by the order of the principal symbol of 
  $(-\Delta)^{\frac{\alpha}{2}}\!+\!\min\{V,0\}$.
  For $t\!>\!0$ this is seen in \eqref{eq:heatkernel} for $V\!=\!0$,
  in \eqref{eq:heatkernelv} for $\alpha\in(0,2)$ and $V\!\geq\!0$,
  in \eqref{eq:genhardyheat} for $V=a|x|^{-\alpha}$ with $a\geq a_*$,
  and in Huang et al.~\cite[Theorem~1.3]{Huangetal2018} when
  $V\in K_\alpha(\R^d)$ is a perturbation.
  For complex times the power is expected to be $\alpha^{-1}$, too.
  This is confirmed by Corollaries \ref{PLapplied} and
  \ref{PLapplied2} for $V\geq0$, \eqref{eq:zhaozheng} for
  $V\in K_\alpha(\R^d)$, and estimates for $\me{-zH_\alpha}$ when
  $\alpha\in2\N$, see \eqref{eq:plgaussian} below.
  On the other hand, a natural choice for $\zeta$ is not obvious
  due to the complicated relation between $\theta$ and $|z|$ in
  \eqref{eq:plgge}. For $\alpha\in 2\N$, $V\geq0$, $x,y\in\R^d$,
  $r,s>0$, and $|x-y|>r+s$, Davies \cite[Theorem~10]{Davies1995}
  showed
  \begin{align}
    \label{eq:plgaussian}
    \|\one_{B_x(r)}\me{-|z|\me{i\theta}H_\alpha}\one_{B_y(s)}\|_{2\to 2}
    \leq \exp\!\left(-c\left(\frac{d(B_x(r),B_y(s))}{|z|^{\frac{1}{\alpha}}(\cos\theta)^{-\frac{\alpha-1}{\alpha}}}\right)^{\frac{\alpha}{\alpha-1}}\right)
  \end{align}
  with similar estimates being available for more general (even singular)
  $V$, cf.~\cite[pp.~154-156]{Blunck2007}. This shows that
  $r_z=|z|^{1/\alpha}(\cos\theta)^{-\zeta}$ with
  $\zeta\equiv(\alpha-1)/\alpha>0$ is a natural choice when $\alpha\in2\N$.
  In fact, $\zeta>0$ is necessary for \eqref{eq:plgaussian} to hold on
  all of $\C_+$, which can be seen by fixing $x\neq y$ and letting $|\theta|\to\pi/2$.
  
  (3) Estimate \eqref{eq:plgaussian} and many related variants, such as
  \cite[Proposition~4.1]{Carronetal2002} by Carron et al.~and
  \cite[Theorem~2.1]{Blunck2007} by Blunck, were proved using
  Phragm\'en--Lindel\"of principles, see Davies \cite[Lemma~9]{Davies1995}
  for the original version.
  The presence of exponential bounds for $\alpha\in2\N$ and $t>0$
  is essential for the derivation of the clean dependence on
  \eqref{eq:plgaussian} on $\cos\theta$.  
\end{remarks}

The proof of Theorem~\ref{plgge} is inspired by Blunck
\cite[Theorem~2.1]{Blunck2007} and uses two consequences
(Propositions~\ref{hypercontractive} and \ref{eqgge}) of the dyadic
Davies--Gaffney estimates that are contained in Section~\ref{ss:consequencesgge}.

\begin{proof}[Proof of Theorem~\ref{plgge}]
  For $\mu=1/2-1/q$ and $\nu=1/p-1/2$ we have
  \begin{align}
    \label{eq:plggeaux1}
    r_t^{d\mu}\|\me{-tH_\alpha}\|_{2\to q}\lesssim C_{\DG}^{\mu/(\mu+\nu)}
    \quad \text{and} \quad
    r_t^{d\nu}\|\me{-tH_\alpha}\|_{p\to2} \lesssim C_{\DG}^{\nu/(\mu+\nu)}\,,
    \quad t>0
  \end{align}
  by Proposition~\ref{hypercontractive}, whenever
  $p=2$ and $q\in[2,\infty)$, or $p\in[1,2]$ and $q=2$, or $q=p'$.
  In all other cases \eqref{eq:plggeaux1} follows from Riesz--Thorin
  interpolation between \eqref{eq:hypercontractive} in
  Proposition~\ref{hypercontractive} and the $L^p\to L^p$ and $L^q\to L^q$
  boundedness of $\me{-tH_\alpha}$.
  Thus, we obtain for $z=|z|\me{i\theta}\in\C_+$ and
  $t\in(0,\re(z)/2)$,
  \begin{align}
    \label{eq:plbound1pre}
    \begin{split}
      \|\me{-zH_\alpha}\|_{p\to q}
      \leq \|\me{-tH_\alpha}\|_{2\to q}\, \|\me{-(z-2t)H_\alpha}\|_{2\to2}\, \|\me{-tH_\alpha}\|_{p\to2}
      \lesssim C_{\DG}\, r_t^{-d\left(\frac{1}{p}-\frac{1}{q}\right)}\,.
    \end{split}
  \end{align}
  Combining \eqref{eq:plbound1pre} with
  $\lim_{t\nearrow\re z/2} r_t^{-1} = 2^{1/\alpha}(|z|\cos\theta)^{-\frac1\alpha}$
  yields
  \begin{align*}
    \|\me{-zH_\alpha}\|_{p\to q}
    \lesssim C_{\DG}(|z|\cos\theta)^{-\frac{d}{\alpha}\left(\frac{1}{p}-\frac{1}{q}\right)}\,,
  \end{align*}
  which proves \eqref{eq:plggeintegrated}.
  Moreover, for any ball $B\equiv B_{x_0}(r_0)$ and its $k$-th dyadic
  annulus $A_2^{(k)}\equiv A_2(x_0,r_0,k)$ centered around $B$
  (with $k\in\N_0$ and the convention $A_2^{(0)}=B$), we obtain
  \begin{align}
    \label{eq:plbound1}
    \begin{split}
      \|\one_{B}\me{-zH_\alpha}\one_{A_2^{(k)}}\|_{p\to q}
      \!\leq\! c_{d,\alpha,\beta,p,q,\sigma} C_{\DG}(|z|\cos\theta)^{-\frac{d}{\alpha}\left(\frac{1}{p}-\frac{1}{q}\right)} \,.
    \end{split}
  \end{align}
  On the other hand, \eqref{eq:gge3eq} in Proposition~\ref{eqgge}
  implies for $\theta=0$, 
  \begin{align}
    \label{eq:plbound2}
    \begin{split}
      \|\one_{B}\me{-tH_\alpha}\one_{A_2^{(k)}}\|_{p\to q}
      & \leq c_{d,\alpha,\beta,p,q,\sigma} C_{\DG}\, t^{-\frac{d}{\alpha}\left(\frac{1}{p}-\frac{1}{q}\right)} \cdot \left(1 + \frac{|B|^{\frac\alpha d}}{t}\right)^{\frac{d}{\alpha q}} \\
      & \quad \times \left[\left(\frac{d(B,A_2^{(k)})}{r_t}\right)^{-\beta}\cdot \left(\frac{|A_2^{(k)}|}{r_t^d}\right)^{\frac1\sigma}\theta(k-2)+\theta(1-k)\right]\,.
    \end{split}
  \end{align}
  Define the analytic function $F:\C_+\to\mathcal{B}(L^p\to L^q)$ by
  \begin{align}
    \label{eq:deffnegv}
    \begin{split}
      F(z)
      & := (c_{d,\alpha,\beta,p,q,\sigma} C_{\DG})^{-1}\, \left(1+\frac{|B|^{\frac\alpha d}}{z}\right)^{-\frac{d}{\alpha q}} \one_{B}\me{-zH_\alpha}\one_{A_2^{(k)}}\,.
    \end{split}
  \end{align}
  By \eqref{eq:plbound1} and \eqref{eq:plbound2} we have (using
  $|1+|B|^{\alpha/d}/z|\geq1$ for $z\in\C_+$)
  \begin{align*}
    \|F(|z|\me{i\theta})\|_{p\to q}
    & \leq (|z|\cos\theta)^{-\frac{d}{\alpha}\left(\frac{1}{p}-\frac{1}{q}\right)}\,,
  \end{align*}
  \begin{align*}
    \begin{split}
      \| F(|z|)\|_{p\to q}
      & \leq |z|^{-\frac{d}{\alpha}\left(\frac1p-\frac1q\right)} \left[\left(\frac{d(B,A_2^{(k)})}{|z|^{1/\alpha}}\right)^{-\beta}\cdot\left(\frac{|A_2^{(k)}|^{\frac\alpha d}}{|z|}\right)^{\frac{d}{\alpha\sigma}}\theta(k-2)+\theta(1-k)\right]\,.
    \end{split}
  \end{align*}
  We now apply Theorem~\ref{PL} with
  $a_1=1$, $\beta_1=d(1/p-1/q)/\alpha$, and
  $a_2=d(B,A_2^{(k)})^\alpha$, $\beta_2=\beta/\alpha$,
  $a_3=|A_2^{(k)}|^{\frac{\alpha}{d}}$, $\beta_3=\frac{d}{\alpha\sigma}$ if $k\in\N\setminus\{1\}$,
  and $a_2=a_3=1$ and $\beta_2=\beta_3=0$ if $k\in\{0,1\}$.
  Abbreviating $\gamma_\epsilon=\epsilon|\theta|+(1-\epsilon)\pi/2$,
  we obtain
  \begin{align*}
    \begin{split}
      & \|F(|z|\me{i\theta})\|_{p\to q}
      \lesssim_{\epsilon} (|z|\cos\theta)^{-\frac{d}{\alpha}\left(\frac{1}{p}-\frac{1}{q}\right)}\\
      & \quad \times\! \left\{\theta(1-k) \!+\! \theta(k-2)\left[1\wedge \left(\frac{d(B,A_2^{(k)})^\alpha}{|z|}\right)^{-\frac{\beta}{\alpha}}\cdot\left(\frac{|A_2^{(k)}|^{\frac{\alpha}{d}}}{|z|}\right)^{\frac{d}{\alpha\sigma}}\right]^{1-\frac{|\theta|}{\gamma_\epsilon}}\right\}
    \end{split}
  \end{align*}
  for all $|z|>0$ and $|\theta|<\pi/2$.
  By the definition \eqref{eq:deffnegv} and 
  \begin{align*}
    \left|1+\frac{|B|^{\alpha/d}}{z}\right|^{\frac{d}{\alpha q}}
    \lesssim \left(1+\frac{|B|}{|z|^{d/\alpha}}\right)^{1/q}\,,
  \end{align*}
  this implies
  \begin{align*}
    & \|\one_{B}\me{-zH_\alpha}\one_{A_2^{(k)}}\|_{p\to q}
      \lesssim C_{\DG} (|z|\cos\theta)^{-\frac{d}{\alpha}(\frac{1}{p}-\frac{1}{q})} \cdot\left(1+\frac{|B|}{|z|^{\frac d\alpha}}\right)^{\frac1q}\\
    & \quad \times \left\{\theta(1-k) + \theta(k-2)\left[1\wedge \left(\frac{d(B,A_2^{(k)})^\alpha}{|z|}\right)^{-\frac{\beta}{\alpha}}\cdot\left(\frac{|A_2^{(k)}|^{\frac{\alpha}{d}}}{|z|}\right)^{\frac{d}{\alpha\sigma}}\right]^{1-\frac{|\theta|}{\gamma_\epsilon}}\right\}\,.
  \end{align*}
  Choosing $B\equiv B_x(r_z)$ and $A_2^{(k)}\equiv A_2(x,r_z,k)$,
  and recalling $r_z=|z|^{1/\alpha}(\cos\theta)^{-\zeta}$ and
  $r_{|z|}=|z|^{1/\alpha}$ yields
  \begin{align*}
    \begin{split}
      & \|\one_{B_x(r_z)}\me{-zH_\alpha}\one_{A_2(x,r_z,k)}\|_{p\to q}\\
      & \quad \lesssim C_{\DG}(|z|\cos\theta)^{-\frac{d}{\alpha}\left(\frac{1}{p}-\frac{1}{q}\right)} \cdot \left(1+\frac{r_z}{r_{|z|}}\right)^{\frac{d}{q}}\\
      & \qquad \times \left\{\theta(1-k) + \theta(k-2)\left(\frac{2^kr_z}{r_{|z|}}\right)^{-(\beta-\frac{d}{\sigma})\cdot(1-\frac{|\theta|}{\gamma_\epsilon})}\right\}\\
      & \quad \lesssim C_{\DG}(|z|\cos\theta)^{-\frac{d}{\alpha}\left(\frac{1}{p}-\frac{1}{q}\right)} \cdot (\cos\theta)^{-\frac{d\zeta}{q}} \cdot 2^{-k\left(\beta-\frac d\sigma\right)\left(1-\frac{|\theta|}{\epsilon|\theta|+(1-\epsilon)\pi/2}\right)}
    \end{split}
  \end{align*}
  for all $z\in\C_+$. Here we used $\beta\geq d/\sigma$ and
  $r_z\geq r_{|z|}$ since $\zeta\geq0$. This shows \eqref{eq:plgge}
  and concludes the proof of Theorem~\ref{plgge}.
\end{proof}

For $p\in[1,2]$ Theorem~\ref{plgge} can be used to extend dyadic
$(2,p',\sigma)$ and $(p,2,\tilde\sigma)$ Davies--Gaffney estimates for
$\me{-tH_\alpha}$ to complex times.
In Proposition~\ref{hypercontractivedual} we show that for $t>0$,
$\sigma=2$, and $\tilde\sigma=p'$ they can be inferred from $(p,p',p')$
estimates under a slightly stronger decay condition.

\begin{definition}
  \label{ggerestricted}
  Let $r>0$, $(T_r)_{r>0}$ be a family of linear bounded operators
  on $L^2(\R^d)$,
  $1\leq p \leq q\leq\infty$,
  $\beta,\sigma>0$,
  and $g:\R_+\to\R_+$ satisfy
  $g(\lambda)\sim_\beta(1+\lambda)^{-\beta}$.
  Then $T_r$ is said to satisfy the
  \begin{enumerate}
  \item \emph{restricted dyadic $(p,q,\sigma)$ Davies--Gaffney estimate}
    if there is a finite constant $C_{\DG}=C_{\DG}(d,p,q,\beta,\sigma)>0$
    such that \eqref{eq:gge3} holds,
    and if $\beta>d(1/p+1/\sigma)$.

  \item \emph{dual dyadic $(p,p',p')$ Davies--Gaffney estimate} if $p\in[1,2]$,
    if there is a finite constant $C_{\DG}=C_{\DG}(d,p,\beta,\sigma)>0$
    such that \eqref{eq:gge3} holds, and if $\beta>d(1/2+1/p')$.
  \end{enumerate}
\end{definition}

\begin{remarks}
  (1) If $p\in[1,2]$, then the restricted dyadic $(p,p',p')$ estimate
  implies the dual dyadic $(p,p',p')$ and the dyadic $(p,p',p')$
  Davies--Gaffney estimate.
  
  (2) In Proposition~\ref{hypercontractivedual} we show that
  dual dyadic $(p,p',p')$ estimates imply dyadic $(2,p',2)$ and
  $(p,2,p')$ estimates, whereas restricted dyadic $(p,p',p')$ estimates
  imply restricted dyadic $(2,p',2)$ and $(p,2,p')$ Davies--Gaffney
  estimates.

  (3) The notions of dyadic $(p,p,\sigma)$ and restricted dyadic
  $(p,p,\sigma)$ Davies--Gaffney estimates coincide.

  (4) The semigroup $\me{-t\cl_{a,\alpha}}$ in Example \ref{application}
  satisfies the restricted dyadic $(p,p',p')$ estimates whenever
  $a\in[-a_*,0)$ and $p\in(d/(d-\delta),2]$. Moreover, if $V\geq0$,
  then $\me{-tH_\alpha}$ satisfies the restricted dyadic $(p,p',p')$
  Davies--Gaffney estimates for all $p\in[1,2]$.
\end{remarks}

The following corollary shows that dual dyadic $(p,p',p')$
Davies--Gaffney estimates estimates can be used to derive
complex-time $(2,p',2)$ and $(p,2,p')$ estimates.

\begin{corollary}
  \label{plggecor}
  Let $\alpha>0$, $\beta>0$, $1\leq p\leq2$, and $t>0$.
  Suppose $\me{-tH_\alpha}$ satisfies the dual dyadic $(p,p',p')$
  Davies--Gaffney estimate (Definition \ref{ggerestricted})
  with $r\equiv r_t:=t^{1/\alpha}$ and $g(\lambda)\sim_\beta(1+\lambda)^{-\beta}$.
  Then, for $z=|z|\me{i\theta}\in\C_+$,
  $\zeta\geq0$, $r_z:=|z|^{1/\alpha}(\cos\theta)^{-\zeta}$,
  $\epsilon\in(0,1)$, and
  \begin{align}
    \label{eq:deftildebetaagain}
    \begin{split}
      \tilde\beta^{(1)} & \equiv \tilde\beta_{d,\beta,\epsilon}^{(1)}(\theta)
      := \left(\beta-\frac{d}{2}\right) \left(1-\frac{|\theta|}{\epsilon|\theta|+(1-\epsilon)\pi/2}\right) \geq0 \,, \\
      \tilde\beta^{(2)} & \equiv \tilde\beta_{d,\beta,p,\epsilon}^{(2)}(\theta)
      := \left(\beta-\frac{d}{p'}\right) \left(1-\frac{|\theta|}{\epsilon|\theta|+(1-\epsilon)\pi/2}\right) \geq0 \,, \\
    \end{split}
  \end{align}
  one has
  \begin{align}
    \label{eq:plggecor}
    \begin{split}
      & \|\one_{B_x(r_z)}\me{-zH_\alpha}\one_{A_2(x,r_z,k)}\|_{2\to p'}\\
      & \quad \lesssim_{d,\alpha,p,\beta,\epsilon} C_\DG\, (|z|\cos\theta)^{-\frac{d}{\alpha}\left(\frac{1}{2}-\frac{1}{p'}\right)} (\cos\theta)^{-\frac{d\zeta}{p'}} \cdot 2^{-k\tilde\beta^{(1)}} \\
      & \quad = C_\DG\, r_z^{-d\left(\frac{1}{2}-\frac{1}{p'}\right)}(\cos\theta)^{-d\left(\zeta+\frac{1}{\alpha}\right)\left(\frac{1}{2}-\frac{1}{p'}\right)-\frac{d\zeta}{p'}} \cdot 2^{-k\tilde\beta^{(1)}}
    \end{split}
  \end{align}
  and
  \begin{align}
    \label{eq:plggecor2}
    \begin{split}
      & \|\one_{B_x(r_z)}\me{-zH_\alpha}\one_{A_2(x,r_z,k)}\|_{p\to 2}\\
      & \quad \lesssim_{d,\alpha,p,\beta,\epsilon} C_\DG\, (|z|\cos\theta)^{-\frac{d}{\alpha}\left(\frac{1}{2}-\frac{1}{p'}\right)} (\cos\theta)^{-\frac{d\zeta}{2}} \cdot 2^{-k\tilde\beta^{(2)}} \\
      & \quad = C_\DG\, r_z^{-d\left(\frac{1}{2}-\frac{1}{p'}\right)}(\cos\theta)^{-d\left(\zeta+\frac{1}{\alpha}\right)\left(\frac{1}{2}-\frac{1}{p'}\right)-\frac{d\zeta}{2}} \cdot 2^{-k\tilde\beta^{(2)}}
    \end{split}
  \end{align}
  for all $x\in\R^d$ and $k\in\N_0$.
  Moreover,
  \begin{align}
    \label{eq:plggecor3}
    \|\me{-zH_\alpha}\|_{2\to p'} = \|\me{-\overline zH_\alpha}\|_{p\to2}
    \lesssim_{d,\alpha,p,\beta} C_{\DG}(|z|\cos\theta)^{-\frac{d}{\alpha}\left(\frac{1}{p}-\frac{1}{2}\right)}\,.
  \end{align}
\end{corollary}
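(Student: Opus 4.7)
The plan is to reduce Corollary \ref{plggecor} to two applications of Theorem \ref{plgge}, preceded by a single appeal to Proposition \ref{hypercontractivedual}. The hypothesized dual dyadic $(p,p',p')$ Davies--Gaffney estimate for $\me{-tH_\alpha}$ at scale $r_t = t^{1/\alpha}$, fed into Proposition \ref{hypercontractivedual}, yields, as recorded in Remark (2) following Definition \ref{ggerestricted}, both a dyadic $(2,p',2)$ and a dyadic $(p,2,p')$ Davies--Gaffney estimate for $\me{-tH_\alpha}$ at the same scale, with the same decay exponent $\beta$ and the same constant $C_{\DG}$ up to a factor depending only on $d,p,\beta$. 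The condition $\beta > d(1/2 + 1/p')$ built into Definition \ref{ggerestricted}(2) translates in both cases to the condition $\beta > d(1/\sigma + 1/q)$ required by Definition \ref{gge}.

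To obtain \eqref{eq:plggecor}, I apply Theorem \ref{plgge} to the dyadic $(2,p',2)$ estimate, taking $p_{\text{new}} = 2$, $q_{\text{new}} = p'$, $\sigma_{\text{new}} = 2$. Since $p_{\text{new}} = 2$, the extra $L^p\to L^p$/$L^q\to L^q$ clause of Theorem \ref{plgge} (which is invoked only when $p\in[1,2)$ and $q\in(2,\infty]$) does not engage. The resulting decay exponent in \eqref{eq:plgge} is $\tilde\beta = (\beta - d/2)(1 - |\theta|/\gamma(\epsilon,\theta)) = \tilde\beta^{(1)}$, and the prefactor $(\cos\theta)^{-d\zeta/q}$ specializes to $(\cos\theta)^{-d\zeta/p'}$, matching \eqref{eq:plggecor}. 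For \eqref{eq:plggecor2}, I apply Theorem \ref{plgge} to the dyadic $(p,2,p')$ estimate with $\sigma_{\text{new}} = p'$ and $q_{\text{new}} = 2$; here $q_{\text{new}} = 2$, so again the auxiliary assumption is not triggered, and the exponent becomes $\tilde\beta = (\beta - d/p')(1 - |\theta|/\gamma(\epsilon,\theta)) = \tilde\beta^{(2)}$, while the prefactor reads $(\cos\theta)^{-d\zeta/2}$.

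For the integrated bound \eqref{eq:plggecor3}, \eqref{eq:plggeintegrated} applied to the $(p,2,p')$ estimate furnishes $\|\me{-zH_\alpha}\|_{p\to 2} \lesssim C_{\DG}(|z|\cos\theta)^{-\frac{d}{\alpha}(1/p - 1/2)}$, and the identity $1/p - 1/2 = 1/2 - 1/p'$ matches the claimed rate. The equality of the two norms is then the self-adjointness of $H_\alpha$: $(\me{-zH_\alpha})^* = \me{-\overline z H_\alpha}$ and $\cos(\arg z) = \cos(\arg \overline z)$, so the $L^p\to L^2$ and $L^2\to L^{p'}$ norms are transposes with identical upper bounds. (Alternatively, the $2\to p'$ bound also follows directly from \eqref{eq:plggeintegrated} applied to the $(2,p',2)$ estimate.)

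The main obstacle is almost entirely bookkeeping. The only point that requires genuine care is verifying that neither of the two intended applications of Theorem \ref{plgge} activates its auxiliary $L^p\to L^p$/$L^q\to L^q$ boundedness hypothesis; as noted above, the choices $p_{\text{new}} = 2$ (first application) and $q_{\text{new}} = 2$ (second application) each place us outside the case $p\in[1,2),\ q\in(2,\infty]$ in which that assumption is imposed. After that, it remains only to rewrite the factor $r_z^{-d(1/p_{\text{new}} - 1/q_{\text{new}})}(\cos\theta)^{-d(\zeta+1/\alpha)(1/p_{\text{new}} - 1/q_{\text{new}}) - d\zeta/q_{\text{new}}}$ appearing on the right-hand side of \eqref{eq:plgge} in the two equivalent forms displayed in \eqref{eq:plggecor} and \eqref{eq:plggecor2}, using $r_z = |z|^{1/\alpha}(\cos\theta)^{-\zeta}$.
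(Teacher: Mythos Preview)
Your proposal is correct and follows essentially the same route as the paper: invoke Proposition \ref{hypercontractivedual} to pass from the dual dyadic $(p,p',p')$ estimate to dyadic $(2,p',2)$ and $(p,2,p')$ estimates, then apply Theorem \ref{plgge} twice. Your additional observation that the auxiliary $L^p\to L^p$/$L^q\to L^q$ hypothesis in Theorem \ref{plgge} is not triggered (since $p_{\text{new}}=2$ in the first application and $q_{\text{new}}=2$ in the second) is a useful detail that the paper leaves implicit.
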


\begin{proof}
  By Proposition~\ref{hypercontractivedual} the dual dyadic $(p,p',p')$
  Davies--Gaffney estimate \eqref{eq:gge3} implies the dyadic $(2,p',2)$
  and $(p,2,p')$ Davies--Gaffney estimates.
  Thus, assumption \eqref{eq:assumpl} in Theorem~\ref{plgge} with
  $(p,q,\sigma)=(2,p',2)$ or $(p,q,\sigma)=(p,2,p')$ there is
  satisfied.
  The proof is concluded by an application of Theorem~\ref{plgge}.
\end{proof}

While the $L^p\to L^q$ estimates \eqref{eq:plggeintegrated} and
$L^2\to L^{p'}$ and $L^p\to L^2$ estimates \eqref{eq:plggecor3}
could be proved rather directly, one could have also obtained them
by combining Proposition~\ref{hypercontractive} with
\eqref{eq:plgge}, \eqref{eq:plggecor}, and \eqref{eq:plggecor2}.
However, this argument requires a smallness assumption on $|\theta|$ and
produces another non-positive power of $\cos\theta$.
On the other hand, it seems difficult to extend estimates for
$\|\me{-tH_\alpha}\|_{p\to p}$ (cf.~Corollary~\ref{lpboundedness}) to
complex times without any restrictions on $|\theta|$ or $V$.

\begin{corollary}
  \label{lpboundednesscomplex}
  Let $\alpha>0$, $\beta>0$, $1\leq p\leq2$, and $t>0$.
  
  (1) Suppose $\me{-tH_\alpha}$ satisfies
  the dyadic $(p,2,p')$ Davies--Gaffney estimate (Definition \ref{gge})
  or the dual dyadic $(p,p',p')$ Davies--Gaffney estimate
  (Definition \ref{ggerestricted})
  with $r\equiv r_t:=t^{1/\alpha}$ and $g(\lambda)\sim_\beta(1+\lambda)^{-\beta}$.
  Then, for $z=|z|\me{i\theta}\in\C_+$,
  $\zeta\geq0$, $r_z:=|z|^{1/\alpha}(\cos\theta)^{-\zeta}$,
  $\epsilon\in(0,1)$, and
  \begin{align*}
    \tilde\beta^{(2)}
    \equiv \tilde\beta_{d,\beta,p,\epsilon}^{(2)}(\theta)
    := \left(\beta-\frac{d}{p'}\right) \left(1-\frac{|\theta|}{\epsilon|\theta|+(1-\epsilon)\pi/2}\right) \geq0\,,
  \end{align*}
  one has
  \begin{align}
    \begin{split}
      \label{eq:lpboundednessggecomplex}
      \|\one_{B_x(r_z)}\me{-zH_\alpha}\one_{A_2(x,r_z,k)}\|_{p\to p}
      & \lesssim_{d,\alpha,p,\beta,\epsilon} C_\DG\, (\cos\theta)^{-d\left(\zeta+\frac{1}{\alpha}\right)\left(\frac12-\frac{1}{p'}\right)-\frac{d\zeta}{2}} 2^{-k\tilde\beta^{(2)}}
    \end{split}
  \end{align}
  for all $x\in\R^d$ and $k\in\N_0$.
  
  (2) If $\me{-tH_\alpha}$ satisfies the restricted dyadic $(p,2,p')$ or
  $(p,p',p')$ Davies--Gaffney estimate (Definition \ref{ggerestricted})
  and $|\theta|<\pi/2$ satisfies $\tilde\beta^{(2)}>d/p$, i.e.,
  \begin{align}
    \label{eq:condthetalpboundedness}
    \frac{|\theta|}{\epsilon|\theta|+(1-\epsilon)\pi/2} < 1 - \frac{d}{p}\left(\beta-\frac{d}{p'}\right)^{-1}\,,
  \end{align}
  then
  \begin{align}
    \label{eq:lpboundednessggecomplex2}
    \|\me{-zH_\alpha}\|_{p\to p} = \|\me{-\overline zH_\alpha}\|_{p'\to p'}
    \lesssim_{d,\alpha,p,\beta,\epsilon} C_\DG\, (\cos\theta)^{-d\left(\zeta+\frac{1}{\alpha}\right)\left(\frac12-\frac{1}{p'}\right)-\frac{d\zeta}{2}}\,.
  \end{align}
  
  (3) Suppose $V\in K_\alpha(\R^d)$ and $\mu_{\epsilon,V,d,\alpha}>0$ is
  the constant appearing in \eqref{eq:zhaozheng}.
  Then for all $z\in\C_+$ and $0<\epsilon\ll1$,
  \begin{align}
    \label{eq:lpboundednessggecomplex3}
    \|\me{-zH_\alpha}\|_{1\to 1}
    \lesssim_{d,\alpha,p} \me{\mu_{\epsilon,V,d,\alpha}|z|} (\cos\theta)^{-d(\frac1\alpha-\frac12)\one_{\{\alpha<1\}} -(\frac{d}{2}+\alpha-1)\one_{\{\alpha\geq1\}}}\,.
  \end{align}
\end{corollary}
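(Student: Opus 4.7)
The plan is to handle the three parts in order: (1) is essentially a one-line consequence of the $L^p\to L^2$ dyadic estimates already supplied by Corollary \ref{plggecor}, (2) follows from (1) by a standard summation over a covering by balls of radius $r_z$, and (3) is a direct application of Schur's test to the Zhao--Zheng pointwise bound.

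For part (1), I would first reduce both hypotheses to the dyadic $(p,2,p')$ Davies--Gaffney assumption: this is immediate in the first case, while in the dual dyadic case Proposition \ref{hypercontractivedual} supplies the needed implication. Theorem \ref{plgge} with $(p,q,\sigma)=(p,2,p')$ then yields
\[
\|\one_{B_x(r_z)}\me{-zH_\alpha}\one_{A_2(x,r_z,k)}\|_{p\to 2} \lesssim C_{\DG}\, r_z^{-d(\frac{1}{p}-\frac{1}{2})}\, M(\theta)\, 2^{-k\tilde\beta^{(2)}},
\]
where $M(\theta):=(\cos\theta)^{-d(\zeta+1/\alpha)(1/2-1/p')-d\zeta/2}$. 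Since $p\leq 2$, Hölder's inequality on the ball gives $\|\one_{B_x(r_z)} g\|_p \leq |B_x(r_z)|^{1/p-1/2}\|g\|_2 \sim r_z^{d(1/p-1/2)}\|g\|_2$; applied to $g=\one_{B_x(r_z)}\me{-zH_\alpha}\one_{A_2(x,r_z,k)}f$, this cancels the $r_z$ prefactor and produces exactly \eqref{eq:lpboundednessggecomplex}.

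For part (2), I would sum the bounds from (1) over a bounded-overlap lattice cover $\{B_{x_j}(r_z)\}_j$ of $\R^d$. For each $j$, the annular partition $\sum_{k\geq 0}\one_{A_2(x_j,r_z,k)}=1$ gives $\one_{B_{x_j}(r_z)}\me{-zH_\alpha}f = \sum_k \one_{B_{x_j}(r_z)}\me{-zH_\alpha}\one_{A_2(x_j,r_z,k)}f$. Applying (1) and splitting $2^{-k\tilde\beta^{(2)}}=2^{-k\eta}\cdot 2^{-k(\tilde\beta^{(2)}-\eta)}$ for some $\eta\in(0,\tilde\beta^{(2)})$ lets us use Hölder in $k$ to obtain
\[
\|\one_{B_{x_j}(r_z)}\me{-zH_\alpha}f\|_p^p \lesssim_\eta M(\theta)^p \sum_k 2^{-kp(\tilde\beta^{(2)}-\eta)} \|\one_{A_2(x_j,r_z,k)}f\|_p^p.
\]
Summing over $j$ and using the multiplicity estimate $\sum_j\one_{A_2(x_j,r_z,k)}\lesssim 2^{kd}$ yields $\|\me{-zH_\alpha}f\|_p^p \lesssim M(\theta)^p \sum_k 2^{-kp(\tilde\beta^{(2)}-\eta)+kd}\|f\|_p^p$, and the geometric series converges precisely under $\tilde\beta^{(2)}>d/p$, which is \eqref{eq:condthetalpboundedness}. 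The duality $\|\me{-zH_\alpha}\|_{p\to p}=\|\me{-\bar z H_\alpha}\|_{p'\to p'}$ is immediate from $(\me{-zH_\alpha})^*=\me{-\bar z H_\alpha}$.

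For part (3), the Zhao--Zheng bound \eqref{eq:zhaozheng} reduces matters to Schur's test: by a scaling change of variable, $\int_{\R^d}|z|(|z|^{1/\alpha}+|x-y|)^{-d-\alpha}\,dy = c_{d,\alpha}$ uniformly in $x$, so the integral kernel has $L^1$-norm in each variable $\lesssim \me{\mu_{\epsilon,V,d,\alpha}|z|}(\cos\theta)^{-\ldots}$ (using the symmetry $\me{-zH_\alpha}(x,y)=\me{-zH_\alpha}(y,x)$); Schur's test yields the $L^1\to L^1$ and $L^\infty\to L^\infty$ bounds with the stated prefactor, and Riesz--Thorin interpolation delivers \eqref{eq:lpboundednessggecomplex3} for every $p\in[1,\infty]$. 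The main technical point I expect to be delicate is in part (2): verifying that the competing rates $2^{kd}$ (from the overlap of the annular cover) and $2^{-kp\tilde\beta^{(2)}}$ (from the dyadic decay) combine to give precisely the sharp threshold $\tilde\beta^{(2)}>d/p$, and that the $\theta$-prefactor $M(\theta)$ is in fact independent of both summation indices so that no additional losses in $\cos\theta$ are incurred. Parts (1) and (3) are routine once Corollary \ref{plggecor} and Theorem \ref{zhaozheng} are at hand.
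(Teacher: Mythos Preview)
Your argument is correct. Parts (1) and (3) match the paper's proof essentially line for line: in (1) the paper also reduces to the $(p,2,p')$ hypothesis via Proposition \ref{hypercontractivedual}, invokes Theorem \ref{plgge} with $(p,q,\sigma)=(p,2,p')$, and applies H\"older on $B_x(r_z)$; in (3) the paper simply cites \eqref{eq:zhaozheng}, and your Schur-test computation is the natural way to unpack that citation.

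Part (2) takes a genuinely different route. You carry out an explicit bounded-overlap lattice covering $\{B_{x_j}(r_z)\}_j$, control the annular multiplicity by $\sum_j\one_{A_2(x_j,r_z,k)}\lesssim 2^{kd}$, and sum the geometric series directly. The paper instead observes that \eqref{eq:lpboundednessggecomplex} itself exhibits $\me{-zH_\alpha}$ as satisfying a dyadic $(p,p,\sigma')$ Davies--Gaffney estimate with $\sigma'=p'(1-|\theta|/\gamma_\epsilon)^{-1}$ and $\beta$ replaced by $\beta(1-|\theta|/\gamma_\epsilon)$; the condition $\tilde\beta^{(2)}>d/p$ then coincides exactly with the decay requirement $\beta'>d(1/\sigma'+1/p)$ in Definition \ref{gge}, so Proposition \ref{hypercontractive} applies immediately and delivers \eqref{eq:lpboundednessggecomplex2}. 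The paper's route is shorter because the covering-and-summation has already been packaged once and for all in Proposition \ref{hypercontractive} via the averaging operators $N_{p,r}$; your route is more hands-on and has the virtue of making the threshold $\tilde\beta^{(2)}>d/p$ emerge transparently from the competition between the overlap factor $2^{kd}$ and the decay $2^{-kp\tilde\beta^{(2)}}$, exactly as you flagged.
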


\begin{remarks}
  (1) The power of $\cos\theta$ in \eqref{eq:lpboundednessggecomplex2}
  could not be correct if the estimate held for all $z\in\C_+$.
  This can be seen by considering $H_\alpha=-\Delta$, since
  $\|\me{z\Delta}\|_{1\to 1}\lesssim (\cos\theta)^{-\frac d2}$
  and $\|\me{z\Delta}\|_{2\to 2}\leq1$ imply
  $\|\me{z\Delta}\|_{p\to p}\lesssim(\cos\theta)^{-d(\frac12-\frac{1}{p'})}$
  for all $z\in\C_+$. Moreover, this upper bound is sharp as there is a
  matching lower bound, cf.~Arendt et al.~\cite[Lemma~2.2]{Arendtetal1997}.

  (2) Since $\beta>d$ in the case of restricted dyadic $(p,2,p')$ or
  $(p,p',p')$ Davies--Gaffney estimates, there exist $|\theta|\in[0,\pi/2)$
  for which $\tilde\beta^{(2)}>d/p$ is satisfied.
\end{remarks}

\begin{proof}
  To prove \eqref{eq:lpboundednessggecomplex} it suffices to assume that
  $\me{-tH_\alpha}$ satisfies the dyadic $(p,2,p')$ Davies--Gaffney estimate
  by Proposition~\ref{hypercontractivedual}.
  By H\"older's inequality and \eqref{eq:plgge} in Theorem~\ref{plgge}
  (which reduces to \eqref{eq:plggecor2} in this case) we have
  \begin{align*}
    & \|\one_{B_x(r_z)}\me{-zH_\alpha}\one_{A_2(x,r_z,k)}f\|_{p}
    \leq |B_x(r_z)|^{\frac{2-p}{2p}} \|\one_{B_x(r_z)}\me{-zH_\alpha}\one_{A_2(x,r_z,k)}f\|_2\\
    & \quad \lesssim_{d,\alpha,p,\beta,\epsilon} C_\DG\, r_z^{d\frac{2-p}{2p} - d\left(\frac1p-\frac12\right)} (\cos\theta)^{-d\left(\zeta+\frac{1}{\alpha}\right)\left(\frac12-\frac{1}{p'}\right)-\frac{d\zeta}{2}} 2^{-k\tilde\beta^{(2)}}\|f\|_p
  \end{align*}
  for any $f\in L^p$, $x\in\R^d$, and $k\in\N_0$. This proves
  \eqref{eq:lpboundednessggecomplex}.

  To prove \eqref{eq:lpboundednessggecomplex2} it suffices to assume
  that $\me{-tH_\alpha}$ satisfies the restricted dyadic $(p,2,p')$
  Davies--Gaffney estimate by Proposition~\ref{hypercontractivedual}.
  If $\tilde\beta^{(2)}>d/p$, then \eqref{eq:lpboundednessggecomplex}
  shows that $\me{-zH_\alpha}$ satisfies the (restricted) dyadic
  $(p,p,p'\cdot(1-|\theta|/\gamma_\epsilon)^{-1})$
  Davies--Gaffney estimate with 
  $c_{d,\alpha,p,\beta,\epsilon}C_\DG\,(\cos\theta)^{-d\left(\zeta+\frac{1}{\alpha}\right)\left(\frac12-\frac{1}{p'}\right)-\frac{d\zeta}{2}}$
  instead of $C_\DG$,
  and $\beta$ replaced by $\beta(1-|\theta|/\gamma_\epsilon)$, where
  $\gamma_\epsilon=\epsilon|\theta|+(1-\epsilon)\pi/2$.
  Thus, \eqref{eq:lpboundednessggecomplex2}
  follows from \eqref{eq:hypercontractive} in Proposition~\ref{hypercontractive}.
  Estimate \eqref{eq:lpboundednessggecomplex3} follows from \eqref{eq:zhaozheng}.
\end{proof}

\subsection{Applicability of the obtained heat kernel bounds}
\label{ss:applicability}

In this subsection we discuss the applicability of the
complex-time bounds obtained in the previous subsections.

\subsubsection{Regularization of Schr\"odinger groups}
\label{ss:schrodingerregularized}

A selection of applications of $L^p\to L^p$ bounds for complex-time heat kernels is
contained in \cite[Chapter~7]{Ouhabaz2005}. Here we focus on one of them, namely
$L^p\to L^p$-bounds of regularizations of Schr\"odinger groups.
The following abstract result is due to Boyadzhiev--de Laubenfels
\cite{BoyadzhievLaubenfels1993}. See also Elmennaoui \cite{Elmennaoui1992} for a
similar result concerning Riesz means.
In view of the ensuing sections it is stated in a slightly more general form
compared to the previous results.

\begin{theorem}[{\cite[Theorem~2.1]{BoyadzhievLaubenfels1993}}]
  \label{schrodingerregularized}
  Let $(\Omega,\mu)$ be a measure space, $p\in[1,\infty)$,
  $A\geq0$ a non-negative (and thereby self-adjoint) operator in $L^2(\Omega)$,
  and $\gamma>\delta\geq0$ such that
  $\|\me{-zA}\|_{p\to p}\lesssim_{\gamma,\delta,p}(\cos\theta)^{-\delta}$ for all
  $z=|z|\me{i\theta}\in\C_+$.
  If $\{\me{-zA}\}_{z\in\C_+}$ is bounded, strongly continuous, holomorphic
  of angle $\pi/2$ on $L^p(\Omega)$, then
  \begin{align}
    \label{eq:schrodingerregularized}
    \|(1+A)^{-\gamma}\me{itA}\|_{p\to p} \lesssim_{\gamma,\delta,p} (1+|t|)^\gamma\,, \quad t\in\R\,.  
  \end{align}
  Moreover, the map $\R\ni t\mapsto(1+A)^{-\gamma}\me{itA}$ is strongly
  continuous on $L^p(\Omega)$.
\end{theorem}

\begin{remark}
  As noted in \cite[p.~153]{Blunck2007}, the fact that $\{\me{-zA}\}_{z\in\C_+}$
  is strongly continuous on $L^p$ on all strict subsectors of $\C_+$ can be
  inferred from the assumptions in the theorem by arguing as in Ouhabaz
  \cite{Ouhabaz1995}, see also \cite[Corollary~7.5]{Ouhabaz2005}.
  The strong continuity of $\R\ni t\mapsto(1+A)^{-\gamma}\me{itA}$ follows from
  \eqref{eq:schrodingerregularized} and the strong continuity and holomorphy of
  $\{\me{-zA}\}_{z\in\C_+}$ as in \cite[p.~211]{Ouhabaz2005}.
\end{remark}

Theorem~\ref{schrodingerregularized} together with the pointwise bounds for
$\me{it(-\Delta)^{\alpha/2}}$ in \cite{ZhaoZheng2020} has the following
consequence.

\begin{corollary}
  Let $\alpha>0$, $p\in[1,\infty)$, and
  \begin{align*}
    \gamma >
    \begin{cases}
      2d(\frac1\alpha-\frac12)|1/p-1/2| & \quad \text{for}\ \alpha\in(0,1)\,,\\
      (d-1)|1/p-1/2| & \quad \text{for}\ \alpha=1\,,\\
      2(\frac d2+\alpha-1)|1/p-1/2| & \quad \text{for}\ \alpha>1\,.
    \end{cases}
  \end{align*}
  Then one has
  $\|(1+(-\Delta)^{\alpha/2})^{-\gamma}\me{it(-\Delta)^{\alpha/2}}\|_{p\to p}\lesssim_{\alpha,\gamma,d,p}(1+|t|)^\gamma$
  for all $t\in\R$.
\end{corollary}

\begin{proof}
  For $\alpha=1$, this is the content of \cite[Theorem~7.20]{Ouhabaz2005}.
  The claim for the other cases follows from Theorem~\ref{schrodingerregularized},
  interpolation with $\|\me{-z(-\Delta)^{\alpha/2}}\|_{2\to2}\leq1$, duality, and the
  bounds for $\me{-z(-\Delta)^{\alpha/2}}$ in \cite[Theorem~1.3]{ZhaoZheng2020},
  which yield
  \begin{align*}
    \|\exp(-z(-\Delta)^{\alpha/2})\|_{1\to 1}
    & \lesssim (\cos\theta)^{-d(\frac1\alpha-\frac12)\one_{\{\alpha<1\}} -(\frac{d}{2}+\alpha-1)\one_{\{\alpha\geq1\}}}\,.
  \end{align*}
  This concludes the proof.
\end{proof}

Unfortunately, the bounds in Corollary~\ref{lpboundednesscomplex}
are still too weak to apply Theorem~\ref{schrodingerregularized} to
$A=H_\alpha$ with $V\neq0$ due to the exponential growth in
\eqref{eq:lpboundednessggecomplex3} and the fact that the bound
\eqref{eq:lpboundednessggecomplex2} only holds for $z$ inside a sector strictly
contained in $\C_+$.

\textit{Remark:}
Chen et al.~\cite[Theorems~1.1, 1.3]{Chenetal2022} recently proved
\eqref{eq:schrodingerregularized}
for non-negative operators $A$ in $L^2(\R^d)$ under the assumption that
$\me{-tA}$ satisfies Poisson-type bounds like \eqref{eq:heatkernelv}
(with $\alpha>1+[d/2]$)
or a slight modification of the dyadic Davies--Gaffney estimate \eqref{eq:gge3}.
In particular, their results imply \eqref{eq:schrodingerregularized}
when $A=H_\alpha$ with $d=1$, $\alpha>1$, and $V\geq0$,
cf.~\cite[Section~1.3]{Chenetal2022}.

\subsubsection{Multiplier theorems}

The spectral theorem asserts that bounded and measurable functions of self-adjoint
operators in Hilbert spaces are bounded operators.
Proving a corresponding statement in Banach spaces is known to be much more delicate.
For instance, H\"ormander's classical multiplier theorem \cite{Hormander1960} asserts
the $L^p(\R^d)$ boundedness of Fourier multipliers $F(-\Delta)$ provided
the multiplier $F$ is sufficiently smooth.
(In fact, as Fefferman \cite{Fefferman1971} demonstrated, some smoothness of $F$ is
necessary for $L^p$-boundedness.)
If $F:[0,\infty)\to\C$ is a bounded, measurable function, then H\"ormander's theorem
asserts that the operator $F(-\Delta)$, which is initially defined via Plancherel's
theorem on $L^2(\R^d)$, extends to an $L^p(\R^d)$ bounded operator for all
$p\in(1,\infty)$ with 
\begin{align*}
  \|F(-\Delta)\|_{p\to p} \lesssim \sup_{t>0}\|\omega(\cdot) F(t\cdot)\|_{H^s(\R)}
\end{align*}
for any fixed non-trivial ``partition of unity'' function $\omega\in C_c^\infty(\R_+)$
which satisfies $\sum_{k\in\Z}\omega(2^kt)=1$ for all $t>0$, whenever $s>d/2$. 
For a selection of H\"ormander multiplier theorems for Schr\"odinger operators
$H_{\alpha=2}$ in $L^2(\R^d)$ we refer to \cite{Hebisch1990,Duongetal2002,Blunck2003}.

For Schr\"odinger operators $H_\alpha$ in $L^2(\R^d)$ with $\alpha\neq2$
there seem -- to the best of the author's knowledge -- only two results available.
Chen et al.~\cite[Section~5.3]{Chenetal2020} proved a multiplier theorem for
$H_\alpha$ in $L^2(\R^1)$ with $\alpha>1$ and $V\geq0$. On the other hand,
\cite[Theorem~2]{Merz2021} contains a H\"ormander multiplier theorem for $H_\alpha$
in $L^2(\R^d)$ with $\alpha<\min\{2,d\}$ and potentials $V(x)$ obeying
$\frac{a}{|x|^\alpha} \leq V(x) \leq \frac{\tilde a}{|x|^\alpha}$ for any $\tilde a\geq a>0$.
The latter result strongly relies on an abstract multiplier theorem by Hebisch
\cite{Hebisch1995} for operators whose heat kernels satisfy weighted
ultracontractive estimates and a certain H\"older condition which are tailored to
(smoothing) Poisson-type heat kernel bounds.

\medskip
Kriegler \cite[Corollary~3.6]{Kriegler2014} proved a H\"ormander multiplier theorem
for operators whose complex-time heat kernels satisfy Poisson-type bounds.
Translated to the language of the present work, Kriegler's theorem might apply to
non-negative operators $H_{\alpha=1}=\sqrt{-\Delta}+V$ in $L^2(\R^d)$ whose
complex-time heat kernel satisfies the following bound:  
there is $\beta\geq0$ such that for all $x,y\in\R^d$,
$z=|z|\me{i\theta}\in\C_+=\{z\in\C_+:\re(z)>0\}$, one has
\begin{align*}
  |\me{-zH}(x,y)| \lesssim (\cos\theta)^{-\beta}\frac{|z|}{|z^2+|x-y|^2|^{(d+1)/2}}\,.
\end{align*}
This assumption is satisfied when $V\equiv0$ and seems natural when one works with
$V\geq0$ or potentials $V$ whose negative part is not too singular, in particular
without Hardy singularities.
However, neither the bounds in Theorem~\ref{zhaozheng}, nor those in
Corollary~\ref{PLapplied2} suffice to conclude a multiplier theorem for $H_\alpha$
with $V\neq0$ using Kriegler's result because of the exponential growth
in \eqref{eq:zhaozheng} and the deteriorating decay in \eqref{eq:PLapplied2},
respectively.
%
%
We hope that this work stimulates further research in these directions.

\subsubsection{Operators on manifolds}

Li and Yau \cite{LiYau1986}, Davies \cite{Davies1988,Davies1990}, and
Sturm \cite{Sturm1992} proved pointwise sub-gaussian heat kernel bounds for the
negative of the Laplace--Beltrami operator $-\Delta_g\geq0$ on Riemannian manifolds
$(M,g)$ with lower bounded Ricci curvature.
Sturm \cite{Sturm1993} obtained sub-gaussian heat kernel bounds also for
$-\Delta_g+V$ when $V$ belongs to the Kato class.
Moreover, pointwise sub-gaussian heat kernel estimates are available for uniformly
elliptic second-order differential operators on domains in $\R^d$ with Dirichlet
boundary conditions, cf.~\cite[Theorem~6.10]{Ouhabaz2005}.
These estimates were extended to complex times, e.g., by Carron--Coulhon--Ouhabaz
\cite[Proposition~4.1]{Carronetal2002} (see also
\cite[Theorems~7.2--7.3]{Ouhabaz2005}) and used to prove
$L^p\to L^p$-estimates for complex-time heat kernels, cf.~\cite[Theorem~4.3]{Carronetal2002}
or \cite[Theorem~7.4]{Ouhabaz2005}.
As discussed in Subsection~\ref{ss:schrodingerregularized}, these estimates lead,
among others, to $L^p\to L^p$-bounds for regularizations of the corresponding
Schr\"odinger group, cf.~\cite[Theorem~5.2]{Carronetal2002} or
\cite[Theorem~7.12]{Ouhabaz2005}.

\medskip
It is natural to consider analogous questions for $(-\Delta_g)^{\alpha/2}+V$, where
$(-\Delta_g)^{\alpha/2}$ is defined by the spectral theorem,
or for fractional Schr\"odinger operators on domains $\Omega\subseteq\R^d$.
We first discuss the latter.
For sufficiently regular $V\in L_{\rm loc}^1(\Omega:\R)$ (e.g.,
$V\in L_{\rm loc}^{d/\alpha}$ suffices), and $\psi$ belonging to the Sobolev space
$H^\alpha(\R^d)$ and vanishing almost everywhere in $\R^d\setminus\Omega$, the
quadratic form $\langle\psi,((-\Delta)^{\alpha/2}+V)\psi\rangle_{L^2(\R^d)}$ is
bounded from below and closed in the Hilbert space $L^2(\Omega)$.
Thus, this form generates a self-adjoint operator $H_\alpha^{(\Omega)}$ in $L^2(\Omega)$,
which is also bounded from below.
For $0\leq V\in L_{\rm loc}^1(\Omega)$ the Friedrichs extension automatically
provides us a self-adjoint operator, whose heat kernel can be bounded using the
maximum principle by
\begin{align*}
  \exp(-tH_\alpha^{(\Omega)})(x,y) \leq \exp(-t(-\Delta)^{\alpha/2})(x,y)\,, \quad x,y\in\Omega\,,
\end{align*}
whenever $\alpha\in(0,2)$.
Since such bounds are the only input in the proof of Corollary~\ref{PLapplied2},
one obtains analogous complex-times heat kernel estimates.
\begin{corollary}
  \label{PLappliedDomains}
  Let $\alpha\in(0,2)$, $\Omega\subseteq\R^d$ be an open subset and
  $0\leq V\in L_{\rm loc}^1(\Omega)$.
  Let further $z=|z|\me{i\theta}$ with $|\theta|\in[0,\pi/2)$,
  $x,y\in\Omega$, $\epsilon\in(0,1)$, and
  $\beta_{d,\alpha,\epsilon}(\theta)$ be defined as in Corollary~\ref{PLapplied2}.
  Then
  \begin{align}
    \label{eq:PLappliedDomains}
    \begin{split}
      |\exp(-zH_\alpha^{(\Omega)})(x,y)|
      \lesssim (|z|\cos\theta)^{-\frac{d}{\alpha}}\left(1+\frac{|x-y|}{|z|^{1/\alpha}}\right)^{-\beta_{d,\alpha,\epsilon}(\theta)}\,.
    \end{split}
  \end{align}
\end{corollary}

\medskip
Regarding fractional powers of $-\Delta_g$ on compact $d$-dimensional Riemannian
manifolds $(M,g)$, Gimperlein and Grubb \cite[Theorem~4.2]{GimperleinGrubb2014}
exploited the subordination principle
(cf.~\cite[Chapter~5]{Schillingetal2012}, see also
\cite{Pollard1946,Hawkes1971,IbragimovLinnik1971} for sharp bounds and
\cite{Pollard1946,Bergstrom1952}
for explicit expressions of the subordinator)
to prove estimates for $\exp(-t(-\Delta_g)^{\alpha/2})$ with $\alpha\in(0,2)$
using those for $\me{t\Delta_g}$.
For $x,y\in M$ and $t>0$ they obtained
\begin{align}
  \label{eq:gimperlein1}
  \me{-t(-\Delta_g)^{\alpha/2}}(x,y)
  \sim_{d,\alpha} \frac{t}{(\rho(x,y)+t^{1/\alpha})^\alpha}\cdot\left(1+(\rho(x,y)+t^{1/\alpha})^{-d}\right)\,,
\end{align}
where $\rho(x,y)$ denotes the geodesic distance between $x$ and $y$.
Moreover, they extended these estimates to complex times
\cite[Theorem~1]{GimperleinGrubb2014} and showed for $z\in\C_+$,
\begin{align}
  \label{eq:gimperleincomplex1}
  |\me{-z(-\Delta_g)^{\alpha/2}}(x,y)|
  \lesssim_{d,\alpha} (\cos\theta)^{-N}\frac{|z|}{(\rho(x,y)+|z|^{\frac1\alpha})^\alpha}\cdot\left(1+(\rho(x,y)+|z|^{\frac1\alpha})^{-d}\right)
\end{align}
with $N=\max\{d/\alpha,7d/2+4\alpha+7\}$.
In \cite[Theorem~4.3]{GimperleinGrubb2014} they also obtained real-time
heat kernel estimates for $(-\Delta_g)^{\alpha/2}+V$ with $V$ being any,
not necessarily self-adjoint, classical pseudodifferential operator of
order $\alpha-1$.
Assuming additionally from now on that $V$ is such that
$(-\Delta_g)^{\alpha/2}+V\geq0$, then their result reads
\begin{align}
  \label{eq:gimperlein2}
  \begin{split}
    |\me{-t((-\Delta_g)^{\alpha/2}+V)}(x,y)|
    & \lesssim_{d,\alpha} \frac{t}{(\rho(x,y)+t^{1/\alpha})^\alpha}\cdot\left(1+(\rho(x,y)+t^{1/\alpha})^{-d}\right)\\
    & \quad + \frac{t}{(\rho(x,y)+t^{1/\alpha})^{d+\alpha-1}}\,, \quad t>0\,.
  \end{split}
\end{align}
Since $(-\Delta_g)^{\alpha/2}+V$ is self-adjoint in this situation,
we can estimate
\begin{align}
  \label{eq:gimperlein3}
  \begin{split}
    |\me{-2z((-\Delta_g)^{\alpha/2}+V)}(x,y)|
    & \leq \|\me{-\re(z)((-\Delta_g)^{\alpha/2}+V)}\|_{1\to2}^2\\
    & \lesssim_{d,\alpha} 1+(|z|\cos\theta)^{-\frac d\alpha} + (|z|\cos\theta)^{-\frac{d-1}{\alpha}}\\
    & \lesssim 1+(|z|\cos\theta)^{-\frac d\alpha}
  \end{split}
\end{align}
for $z=|z|\me{i\theta}\in\C_+$.
On the other hand, since $M$ is compact, \eqref{eq:gimperlein2} implies
\begin{align}
  \label{eq:gimperlein4}
  |\me{-|z|((-\Delta_g)^{\alpha/2}+V)}(x,y)|
  \leq c_{d,\alpha,M} |z|^{-\frac{d}{\alpha}} \left(\frac{\rho(x,y)^{\alpha}}{|z|}\right)^{-\frac{d+\alpha}{\alpha}},
\end{align}
where $c_{d,\alpha,M}>0$ only depends on $d,\alpha$, and $\sup_{x,y\in M}\rho(x,y)<\infty$.
We now prove a variant of Theorem~\ref{PL} to obtain further estimates for
$|\me{-z((-\Delta_g)^{\alpha/2}+V)}(x,y)|$.

\begin{lemma}
  \label{PLGimperlein}
  Let $X$ be a Banach space equipped with a norm $\|\cdot\|$ and
  $F:\C_+=\{z\in\C:\,\re(z)>0\}\to X$ be a holomorphic function satisfying
  \begin{align}
    \label{eq:assum1gimperlein}
    \|F(|z|\me{i\theta})\|
    & \leq 1 + a_1(|z|\cos\theta)^{-\beta_1}
      \quad \text{and}\\
    \label{eq:assum2gimperlein}
    \|F(|z|)\|
    & \leq a_1|z|^{-\beta_1}\left(\frac{a_2}{|z|}\right)^{-\beta_2}\cdot \left(\frac{a_3}{|z|}\right)^{\beta_3}
  \end{align}
  for some $a_1,a_2,a_3>0$,
  $\beta_1,\beta_2,\beta_3\geq0$,
  all $|z|>0$, and all $|\theta|<\pi/2$.
  Then for all $\epsilon\in(0,1)$ one has
  \begin{align}
    \label{eq:pl1gimperlein}
    \begin{split}
      \|F(|z|\me{i\theta})\|
      & \leq 2\left|1+\frac{a_1^{1/\beta_1}}{z}\right|^{\beta_1} (\epsilon \cos\theta)^{-2\beta_1}\cdot \left( \left(\frac{a_2}{|z|}\right)^{-\beta_2} \cdot \left(\frac{a_3}{|z|}\right)^{\beta_3}\right)^{1-|\theta|/\gamma(\epsilon,\theta)}
    \end{split}
  \end{align}
  for all $|\theta|<\pi/2$ and $|z|>0$, where
  $\gamma(\epsilon,\theta):=\epsilon|\theta|+(1-\epsilon)\pi/2$.
\end{lemma}

\begin{proof}
  The proof is similar to that of Theorem~\ref{PL}, so we merely indicate
  the needed modifications.
  We define the functions $H_2(z)$ and $H_3(z)$ as in
  \eqref{eq:choicePL}--\eqref{eq:choicePL2},
  while we choose the function $G(z)$ as
  \begin{align*}
    G(z) := (1+a_1^{1/\beta_1}z)^{-\beta_1} \cdot F(z^{-1}) \cdot H_{2}(z) \cdot H_{3}(z)\,.
  \end{align*}
  Then we have $\|G(|z|)\|\leq 1$ similarly as before,
  but for $\gamma\in(0,\pi/2)$, we obtain
  \begin{align*}
    \|G(|z|\me{i\gamma})\|
    & \leq \frac{1+a_1(|z|^{-1}\cos\gamma)^{-\beta_1}}{|1+a_1^{1/\beta_1}|z|\me{i\gamma}|^{\beta_1}}
      \leq \frac{(\cos\gamma)^{-\beta_1}(1+a_1|z|^{\beta_1})}{(1+a_1^{1/\beta_1}|z|\cos\gamma)^{\beta_1}}
      \leq 2(\cos\gamma)^{-2\beta_1}\,.
  \end{align*}
  Proceeding as in the proof of Theorem~\ref{PL}, we obtain
  for $-\gamma\leq\theta<0$,
  \begin{align*}
    \|F(|z|\me{i\theta})\|
    \leq 2\left|1+\frac{a_1^{1/\beta_1}}{z}\right|^{\beta_1} \cdot \epsilon^{-2\beta_1}(\cos\theta)^{-2\beta_1} \cdot \left(\left(\frac{a_2}{|z|}\right)^{-\beta_2} \cdot \left(\frac{a_3}{|z|}\right)^{\beta_3}\right)^{1+\theta/\gamma(\epsilon,\theta)}\,.
  \end{align*}
  Reflecting the estimate along the real axis yields \eqref{eq:pl1gimperlein}.
\end{proof}

This lemma and the bounds \eqref{eq:gimperlein3}--\eqref{eq:gimperlein4} yield,
as in the proof of Corollary~\ref{PLapplied2}, the following estimates.

\begin{corollary}
  \label{PLappliedgimperlein}
  Let $\alpha\in(0,2)$, $\epsilon\in(0,1)$, and
  $(M,g)$ be a $d$-dimensional compact Riemannian manifold
  with geodesic distance $\rho:M\times M\to\R_+$.
  Let $V$ be a classical pseudodifferential operator of order $\alpha-1$
  (in the sense of \cite{GimperleinGrubb2014})
  such that $(-\Delta_g)^{\alpha/2}+V \geq0$ in $L^2(M)$.
  Then for all $x,y\in M$ estimate \eqref{eq:gimperlein2} holds and,
  for all $z=|z|\me{i\theta}\in\C_+$, and $\beta_{d,\alpha,\epsilon}(\theta)$
  as in\eqref{eq:PLapplied2defbeta} in Corollary~\ref{PLapplied2}, we have
  \begin{align}
    \label{eq:PLappliedgimperlein}
    \begin{split}
      & |\me{-z((-\Delta_g)^{\alpha/2}+V)}(x,y)|\\
      & \quad \leq \! c_{d,\alpha,M}\min\left\{1\!+\!(|z|\cos\theta)^{-\frac{d}{\alpha}}, (\epsilon \cos\theta)^{-\frac{2d}{\alpha}}\left(1\!+\!|z|^{-1}\right)^{\frac d\alpha} \cdot \left(\frac{\rho(x,y)}{|z|^{1/\alpha}}\right)^{-\beta_{d,\alpha,\epsilon}(\theta)}\right\},
    \end{split}
  \end{align}
  where the constant $c_{d,\alpha,M}>0$ only depends on
  $d,\alpha$, and $\sup_{x,y\in M}\rho(x,y)<\infty$.
\end{corollary}

When $\rho(x,y)<|z|^{1/\alpha}$, then the bound \eqref{eq:gimperlein3} for
$\me{-z((-\Delta_g)^{\alpha/2}+V)}(x,y)$ is already suitable.
On the other hand, the estimate
\begin{align*}
  \left(1+|z|^{-1}\right)^{\frac d\alpha} \cdot \left(\frac{\rho(x,y)}{|z|^{1/\alpha}}\right)^{-(d+\alpha)}
  \lesssim \frac{|z|}{\rho(x,y)^{d+\alpha}} + \frac{|z|^{1+\frac d\alpha}}{\rho(x,y)^{d+\alpha}}
  \leq \frac{|z|}{\rho(x,y)^{d+\alpha}} + \frac{|z|}{\rho(x,y)^{\alpha}}
\end{align*}
shows that -- disregarding different negative powers of the prefactor $\cos(\theta)$
and pretending that $\beta_{d,\alpha,\epsilon}(\theta)$ could be replaced by $d+\alpha$ in
\eqref{eq:PLappliedgimperlein} -- the right sides of \eqref{eq:PLappliedgimperlein}
and \eqref{eq:gimperleincomplex1} ``qualitatively agree'' when $\rho(x,y)>|z|^{1/\alpha}$.


\section{Consequences of dyadic Davies--Gaffney estimates}
\label{ss:consequencesgge}

We collect some consequences of the dyadic Davies--Gaffney estimate
\eqref{eq:gge3} (Definitions \ref{gge} and \ref{ggerestricted}).
The dyadic partition $1=\sum_{k\geq0}\one_{A_2(x,r,k)}$ for any
$x\in\R^d$ and $r>0$, and the triangle inequality yield the following
preliminary estimate.

\begin{corollary}
  \label{eqdyadic}
  Let $r>0$, $(T_r)_{r>0}$ be a family of linear operators that satisfy
  the dyadic $(p,q,\sigma)$ Davies--Gaffney estimate \eqref{eq:gge3}
  (Definition \ref{gge}). Then
  \begin{align}
    \label{eq:eqdyadic}
    \|\one_{B_x(r)}T_r\|_{p\to q} \lesssim_{d,\beta,\sigma} C_{\DG}\, r^{-d\left(\frac1p-\frac1q\right)}\,,
    \quad x\in\R^d\,.
  \end{align}
\end{corollary}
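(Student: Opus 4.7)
The approach is exactly the one hinted at in the statement: decompose the identity as $1 = \sum_{k\geq 0}\one_{A_2(x,r,k)}$, apply the triangle inequality in $L^q$, and then sum the resulting geometric series produced by \eqref{eq:gge3}.

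More concretely, fix $x\in\R^d$ and $f\in L^p(\R^d)$ with $\|f\|_p = 1$. The first step is to write
\[
\one_{B_x(r)}T_r f = \sum_{k\geq 0}\one_{B_x(r)}T_r\one_{A_2(x,r,k)}f
\]
and pass to $L^q$ norms via the triangle inequality, obtaining
\[
\|\one_{B_x(r)}T_r f\|_q \leq \sum_{k\geq 0}\|\one_{B_x(r)}T_r\one_{A_2(x,r,k)}\|_{p\to q}\,\|\one_{A_2(x,r,k)}f\|_p,
\]
in which each factor $\|\one_{A_2(x,r,k)}f\|_p$ is trivially bounded by $\|f\|_p=1$. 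The second step is to insert the dyadic $(p,q,\sigma)$ Davies--Gaffney hypothesis \eqref{eq:gge3}, factor out the common $C_{\DG}\, r^{-d(1/p-1/q)}$, and reduce matters to controlling the scalar series $\sum_{k\geq 0}g(2^k)\,2^{kd/\sigma}$. Using $g(2^k)\sim_\beta (1+2^k)^{-\beta}\lesssim 2^{-k\beta}$ for $k\geq 1$ (the $k=0$ term is harmless), this is a geometric series with ratio $2^{-(\beta-d/\sigma)}$.

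The only point to verify is convergence of that series, which requires $\beta > d/\sigma$. This is built into Definition \ref{gge}, which in fact demands the stronger inequality $\beta > d(1/\sigma + 1/q)$, so convergence is automatic and the bound on the series depends only on $d,\beta,\sigma$. Taking the supremum over $\|f\|_p=1$, uniformly in $x\in\R^d$, then yields the claimed estimate $\|\one_{B_x(r)}T_r\|_{p\to q}\lesssim_{d,\beta,\sigma} C_{\DG}\, r^{-d(1/p-1/q)}$. There is no genuine obstacle: the argument is a dyadic Schur-type estimate, and the decay hypothesis on $g$ has been tailored precisely so that the geometric sum is finite.
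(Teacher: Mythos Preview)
Your proof is correct and follows exactly the approach the paper indicates: the dyadic partition $1=\sum_{k\geq0}\one_{A_2(x,r,k)}$ together with the triangle inequality and the Davies--Gaffney hypothesis \eqref{eq:gge3}, then summing the geometric series using $\beta>d/\sigma$. The paper makes the same remark you do, namely that only the summability $\sum_{k\geq0}g(2^k)2^{kd/\sigma}<\infty$ is actually needed here, not the full strength $\beta>d(1/\sigma+1/q)$ of Definition~\ref{gge}.
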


In fact, for \eqref{eq:eqdyadic} to hold, one only needs
$\sum_{k\geq0}g(k)2^{kd/\sigma}<\infty$.
We will upgrade this estimate in Proposition~\ref{hypercontractive}
with the help of the ball averages
\begin{align}
  \label{eq:defmaximalp}
  (N_{p,r}f)(x) := r^{-\frac dp}\|\one_{B_x(r)} f\|_{L^p(\R^d)}
  \quad \text{and} \quad
  (N_{p,q,r}f)(x) := r^{-\frac dq}\|\one_{B_x(r)} f\|_{L^p(\R^d)}
\end{align}
for $p,q\in[1,\infty)$.
The following lemma by Blunck and Kunstmann
summarizes useful estimates involving these averaging operators.

\begin{lemma}[{\cite[Lemma~3.3]{BlunckKunstmann2005}}]
  \label{collectionmaximal}
  Let $1\leq p\leq q\leq\infty$ and $r>0$.
  Then 
  \begin{enumerate}
  \item $(N_{p,r}f)(x) \lesssim_{p,q} (N_{q,r}f)(x)$ for all $x\in\R^d$,
  \item $\|f\|_{L^p(\R^d)} \sim_p \|N_{p,r}f\|_{L^p(\R^d)}$, and
  \item $\|N_{p,q,r}f\|_{L^q(\R^d)} \lesssim_{p,q} \|f\|_{L^p(\R^d)}$.
  \end{enumerate}  
\end{lemma}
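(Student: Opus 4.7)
My plan is to dispatch the three assertions in order by short, self-contained calculations, none of which uses the semigroup structure; everything reduces to Hölder, Fubini, and Young.

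For (1), I would apply Hölder's inequality directly on the ball $B_x(r)$: since $|B_x(r)| = c_d r^d$ and $p\leq q$,
\[
\|\one_{B_x(r)} f\|_{L^p(\R^d)} \leq |B_x(r)|^{1/p - 1/q}\|\one_{B_x(r)} f\|_{L^q(\R^d)} \lesssim_d r^{d(1/p-1/q)}\|\one_{B_x(r)} f\|_{L^q(\R^d)}.
\]
Multiplying by $r^{-d/p}$ yields $(N_{p,r}f)(x)\lesssim_{p,q}(N_{q,r}f)(x)$ pointwise.

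For (2), the key is Fubini--Tonelli applied to the non-negative integrand $\one_{B_x(r)}(y)|f(y)|^p$. Using $|x-y|<r \Leftrightarrow y\in B_x(r) \Leftrightarrow x\in B_y(r)$, one writes
\[
\int_{\R^d} (N_{p,r}f)(x)^p\,\dx = r^{-d}\int_{\R^d}\int_{\R^d}\one_{|x-y|<r}|f(y)|^p\,\dy\,\dx = r^{-d}\int_{\R^d}|B_y(r)|\,|f(y)|^p\,\dy,
\]
and $|B_y(r)| = c_d r^d$ cancels the prefactor up to a dimensional constant, giving $\|N_{p,r}f\|_{L^p}^p\sim_d\|f\|_{L^p}^p$.

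For (3), I would reinterpret $(N_{p,q,r}f)(x)^p$ as a convolution:
\[
(N_{p,q,r}f)(x)^p = r^{-dp/q}\bigl(|f|^p*\one_{B_0(r)}\bigr)(x).
\]
For $q<\infty$, Young's convolution inequality with exponents $(1,q/p,q/p)$ (these satisfy $1+p/q=1+1/(q/p)$, so they are admissible) gives
\[
\bigl\|\,|f|^p*\one_{B_0(r)}\bigr\|_{L^{q/p}} \leq \|f\|_{L^p}^p\,\|\one_{B_0(r)}\|_{L^{q/p}}\lesssim_d r^{dp/q}\|f\|_{L^p}^p,
\]
so that $\|(N_{p,q,r}f)^p\|_{L^{q/p}}\lesssim_d\|f\|_{L^p}^p$; taking $p$-th roots yields the assertion. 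The endpoint $q=\infty$ is immediate since then $(N_{p,\infty,r}f)(x)=\|\one_{B_x(r)}f\|_{L^p}\leq\|f\|_{L^p}$, and taking supremum in $x$ finishes the proof.

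There is no real obstacle here: the lemma is purely geometric/measure-theoretic. The only mild bookkeeping is verifying the Young triple $(1,q/p,q/p)$ in (3) and treating the endpoint $q=\infty$ separately.
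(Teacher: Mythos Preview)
Your argument is correct in all three parts: H\"older on the ball for (1), Fubini for (2), and the convolution rewriting together with Young's inequality (with the admissible triple $(1,q/p,q/p)$ and the separate treatment of $q=\infty$) for (3). The constants depend on $d$ and $p$ (or $d$ and $p,q$), consistent with the paper's convention that dependence on $d$ is suppressed.

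As for comparison: the paper does not prove this lemma at all --- it is quoted verbatim from \cite[Lemma~3.3]{BlunckKunstmann2005} and used as a black box. Your write-up therefore supplies strictly more than the paper does here; there is no alternative argument in the paper to compare against.
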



\begin{proposition}
  \label{hypercontractive}
  Let $r>0$ and $(T_r)_{r>0}$ be a family of linear operators that satisfy
  the dyadic $(p,q,\sigma)$ Davies--Gaffney estimate \eqref{eq:gge3}
  (Definition \ref{gge}). Then
  \begin{align}
    \label{eq:hypercontractive}
    \|T_r\|_{p\to q} \lesssim_{d,\beta,\sigma,p,q} C_{\DG}\, r^{-d\left(\frac1p-\frac1q\right)}\,.
  \end{align}
  If $q=p'$ (so in particular $1\leq p\leq2$)
  and $T_r$ is in addition self-adjoint in $L^2(\R^d)$ and
  satisfies $T_{r+s}=T_rT_s$ for all $r,s>0$, then
  \begin{align}
    \label{eq:dualityheat}
    \|T_r\|_{p\to 2} = \|T_r\|_{2\to p'}
    \lesssim_{d,\beta,\sigma,p} C_{\DG}^{1/2} r^{-d\left(\frac12-\frac{1}{p'}\right)}\,.
  \end{align}
\end{proposition}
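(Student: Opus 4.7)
The plan for \eqref{eq:hypercontractive} is to reduce the global $L^p\to L^q$ bound to a pointwise control of the ball average $N_{q,r} T_r f$, via the dyadic annulus decomposition around each center $x$. First, expanding $f = \sum_{k\geq 0} \one_{A_2(x,r,k)} f$, applying \eqref{eq:gge3} summand-wise, and rewriting
\[
\|\one_{B_x(2^k r)} f\|_p = (2^k r)^{d/q}\,(N_{p,q,2^k r} f)(x),
\]
produces the pointwise inequality
\[
(N_{q,r} T_r f)(x) \lesssim_d C_{\DG}\, r^{-d(1/p-1/q)} \sum_{k\geq 0} (1+2^k)^{-\beta}\, 2^{kd(1/\sigma + 1/q)}\, (N_{p,q,2^k r} f)(x).
\]
Next, take the $L^q$ norm on both sides. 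The decay hypothesis $\beta > d(1/\sigma+1/q)$ built into Definition \ref{gge} makes the scalar sum converge, and Lemma \ref{collectionmaximal}(3) yields $\|N_{p,q,2^k r} f\|_q \lesssim \|f\|_p$ uniformly in $k$. Combined with $\|T_r f\|_q \sim \|N_{q,r} T_r f\|_q$ from Lemma \ref{collectionmaximal}(2), this gives \eqref{eq:hypercontractive}. The boundary case $q=\infty$ does not even need the summation trick: Corollary \ref{eqdyadic} directly implies $\|\one_{B_y(r)} T_r f\|_\infty \lesssim C_{\DG}\, r^{-d/p}\|f\|_p$ uniformly in $y$, hence $\|T_r f\|_\infty \lesssim C_{\DG}\, r^{-d/p}\|f\|_p$ by taking the supremum over $y$.

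For \eqref{eq:dualityheat} the plan is the standard $TT^*$ argument. Since $T_r$ is self-adjoint on $L^2$ and the semigroup property gives $T_r^* T_r = T_r^2 = T_{2r}$, for $f \in L^p$ one has
\[
\|T_r f\|_2^2 = \langle T_{2r} f, f\rangle \leq \|T_{2r} f\|_{p'}\, \|f\|_p \leq \|T_{2r}\|_{p\to p'}\, \|f\|_p^2.
\]
Applying \eqref{eq:hypercontractive} with $q=p'$ to $T_{2r}$ and using $d(1/p-1/p') = 2d(1/2-1/p')$ yields $\|T_r\|_{p\to 2} \lesssim C_{\DG}^{1/2}\, r^{-d(1/2-1/p')}$. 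The equality $\|T_r\|_{p\to 2} = \|T_r\|_{2\to p'}$ is then the usual Banach-space duality combined with self-adjointness on $L^2$.

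The main obstacle I expect is the choice of averaging operator in the pointwise bound: the naive guess $N_{p,2^k r}$ leads, after integration in $x$, only to a bound by $\|f\|_q$ and not $\|f\|_p$. The three-parameter operator $N_{p,q,2^k r}$ from \eqref{eq:defmaximalp} is exactly the right object, as its scaling absorbs the annulus radius while its $L^q$-norm is controlled by $\|f\|_p$ through Lemma \ref{collectionmaximal}(3). This alignment is precisely what makes the decay threshold $\beta > d(1/\sigma+1/q)$ from Definition \ref{gge} natural for the argument.
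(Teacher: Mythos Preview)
Your proof is correct and follows essentially the same route as the paper: reduce $\|T_rf\|_q$ to $\|N_{q,r}T_rf\|_q$ via Lemma~\ref{collectionmaximal}(2), decompose $f$ dyadically around each center $x$, apply \eqref{eq:gge3}, rewrite in terms of $N_{p,q,2^kr}f$, pull the sum out by Minkowski, and use Lemma~\ref{collectionmaximal}(3) together with $\beta>d(1/\sigma+1/q)$; for \eqref{eq:dualityheat} both you and the paper invoke the identity $\|T_r\|_{p\to2}^2=\|T_{2r}\|_{p\to p'}$ coming from self-adjointness and the semigroup law. Your separate treatment of $q=\infty$ via Corollary~\ref{eqdyadic} is a welcome addition, since the averaging operators $N_{p,r}$ in \eqref{eq:defmaximalp} are only defined for finite exponents.
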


\begin{proof}
  By $\|f\|_q\sim\|N_{q,r}f\|_q$, a dyadic partition of unity,
  assumption \eqref{eq:gge3}, and $\|N_{p,q,r}f\|_q\lesssim \|f\|_p$
  for $1\leq p\leq q\leq\infty$, we have
  \begin{align*}
    \|T_rf\|_q
    & \lesssim_q \|N_{q,r}T_rf\|_q
      = \left(\int_{\R^d}\left(r^{-d/q}\|\one_{B_x(r)}T_r\sum_{k=0}^\infty \one_{A_2(x,r,k)}f\|_{q} \right)^q\,dx\right)^{1/q}\\
    & \leq C_{\DG}\, r^{-d\left(\frac1p-\frac1q\right)}\left( \int_{\R^d}\left( \sum_{k=0}^\infty g(2^k)2^{kd(\frac1\sigma+\frac1q)} (2^kr)^{-\frac{d}{q}}\|\one_{B_x(2^kr)}f\|_p \right)^q\,dx \right)^{1/q}\\
    & \leq C_{\DG}\, r^{-d\left(\frac1p-\frac1q\right)}\sum_{k=0}^\infty g(2^k)2^{kd(\frac1\sigma+\frac1q)} \|N_{p,q,2^kr}f\|_q\\
    & \lesssim_{p,q} C_{\DG}\, r^{-d\left(\frac1p-\frac1q\right)}\sum_{k=0}^\infty g(2^k)2^{kd(\frac1\sigma+\frac1q)}\|f\|_p
      \lesssim_{d,\sigma,\beta,q} C_{\DG}\, r^{-d\left(\frac1p-\frac1q\right)}\|f\|_p\,.
  \end{align*}
  This shows \eqref{eq:hypercontractive}.
  Formula \eqref{eq:dualityheat} follows from
  $\|T_{2r}\|_{p\to p'}=\|T_r\|_{p\to2}^2=\|T_r\|_{2\to p'}^2$.
\end{proof}

We need one other consequence of \eqref{eq:gge3} in the proof of
Theorem~\ref{plgge}.
To that end we record the following geometric observations.

\begin{lemma}
  \label{geomannuli}
  Let $x,z\in\R^d$, $r,r_0>0$, and $|x-z|\leq r+r_0$.
  \begin{enumerate}
  \item If $j,k\in\N\setminus\{1\}$ and
    $A_2(z,r,j)\cap A_2(x,r_0,k)\neq\emptyset$,
    then $2^{k-3}r_0\leq 2^jr\leq 2^{k+3}r_0$.
    
  \item If $j\in\N$ and $A_2(z,r,j)\cap B_x(r_0)\neq\emptyset$,
    then $(2^{j-1}-1)r \leq 2r_0$, i.e., $j\leq 1+\log_2(1+2r_0/r)$.
  %
  %
  %
  %
  \end{enumerate}
\end{lemma}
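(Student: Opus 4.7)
The plan is to prove both statements by elementary triangle inequality arguments, using the hypothesis $|x-z|\leq r+r_0$ to transfer bounds between the centers $x$ and $z$. No deep idea is needed; the only thing requiring care is absorbing the lower-order terms coming from $|x-z|\leq r+r_0$ into the dominant dyadic factors.

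For part (1), I would pick any $y\in A_2(z,r,j)\cap A_2(x,r_0,k)$ and record the four resulting inequalities: $2^{j-1}r\leq |y-z|\leq 2^j r$ and $2^{k-1}r_0\leq |y-x|\leq 2^k r_0$, which hold because $j,k\geq 2$ (so we are truly in the annular regime, not in the ball case $A_2(\cdot,\cdot,0)$). Applying the triangle inequality in one direction gives $2^{k-1}r_0\leq |y-x|\leq |y-z|+|x-z|\leq 2^j r + r + r_0$, so that $(2^{k-1}-1)r_0\leq (2^j+1)r$. Since $k\geq 2$, one has $2^{k-1}-1\geq 2^{k-2}$, and using $2^j+1\leq 2\cdot 2^j$ this yields $2^{k-3}r_0\leq 2^j r$. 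Applying the triangle inequality in the other direction, namely $2^{j-1}r\leq |y-z|\leq |y-x|+|x-z|\leq 2^k r_0+r+r_0$, and using the symmetric observation that $2^{j-1}-1\geq 2^{j-2}$ for $j\geq 2$, gives $2^j r\leq 2^{k+3}r_0$. Combining the two bounds is exactly the claim.

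For part (2), the argument is even shorter. Pick any $y\in A_2(z,r,j)\cap B_x(r_0)$ with $j\geq 1$; then $|y-x|\leq r_0$ and $|y-z|\geq 2^{j-1}r$. The triangle inequality gives $2^{j-1}r\leq |y-z|\leq |y-x|+|x-z|\leq r_0+(r+r_0)=r+2r_0$, i.e.\ $(2^{j-1}-1)r\leq 2r_0$. Dividing by $r$, adding $1$, and taking $\log_2$ yields the stated bound $j\leq 1+\log_2(1+2r_0/r)$.

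The only subtlety is really just the bookkeeping in part (1): one must remember that $j,k\geq 2$ (which is what lets one write $2^{k-1}-1\geq 2^{k-2}$ and absorb the additive $r,r_0$ contributions), and accept the slightly lossy constant $3$ in the exponents $2^{k\pm 3}$, which is not tight but is convenient and suffices for the intended application in the proof of Theorem \ref{plgge}.
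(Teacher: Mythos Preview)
Your proof is correct and follows essentially the same triangle-inequality approach as the paper. In fact, your argument for part (1) is slightly cleaner: the paper bounds $|x-y|$ from above and below in terms of $j$ and then carries out an explicit case analysis on how the interval $[2^{k-1}r_0,2^k r_0]$ intersects $[\max\{(2^{j-1}-1)r-r_0,0\},(2^j+1)r+r_0]$, whereas you bypass this by applying the triangle inequality once in each direction and absorbing the additive terms using $2^{k-1}-1\geq 2^{k-2}$ (respectively $2^{j-1}-1\geq 2^{j-2}$).
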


\begin{proof}
  By a translation we can assume $z=0$ without loss of generality.
  In that case $|x|\leq r+r_0$.
  Let $y\in\R^d$ denote those points belonging to the intersection of the
  geometric objects in claims (1) and (2). We use that, if
  $a\leq b$ and $c\leq d$ and, if $[a,b]\cap[c,d]\neq\emptyset$, then either
  $a\leq c\leq b \leq d$, or
  $c\leq a\leq d\leq b$, or
  $a\leq c \leq d\leq b$, or
  $c\leq a\leq b\leq d$.
  If in addition $a=0$ and $b,c,d\geq0$, then $c\leq b$.
  \begin{enumerate}
  \item If the annuli intersect, then
    $|y|\in[2^{j-1}r,2^j r]$ and $|x-y|\in[2^{k-1}r_0,2^k r_0]$.
    By the triangle inequality and the bounds on $|x|$ and $|y|$, we also have
    \begin{align*}
      |x-y| & \leq |x|+|y| \leq r_0 + (2^j+1) r \quad \text{and}\\
      |x-y| & \geq \max\{|y|-|x|,0\} \geq \max\{(2^{j-1}-1)r - r_0 , 0 \}\,.
    \end{align*}
    Thus,
    $|x-y|\in[2^{k-1}r_0,2^k r_0]\cap\left[\max\{(2^{j-1}-1)r - r_0 , 0 \},(2^j+1)r + r_0\right]$.
    This shows that, if $(2^{j-1}-1)r > r_0$, then either
    \begin{enumerate}
    \item $2^{k-1}r_0 \leq (2^{j-1}-1)r-r_0 \leq 2^k r_0\leq (2^{j}+1)r+r_0$, or
    \item $(2^{j-1}-1)r-r_0 \leq 2^{k-1}r_0 \leq (2^j +1)r+r_0\leq 2^k r_0$, or
    \item $2^{k-1}r_0\leq (2^{j-1}-1)r - r_0 \leq (2^j + 1)r + r_0 \leq 2^k r_0$, or
    \item $(2^{j-1}-1)r - r_0\leq 2^{k-1}r_0 \leq 2^k r_0 \leq (2^j+1)r + r_0$.
    \end{enumerate}
    In particular, $2^{k-3}r_0\leq 2^j r\leq 2^{k+3}r_0$.
    If instead $(2^{j-1}-1)r \leq r_0$, then
    $(2^{k-1}-1)r_0\leq (2^j+1)r$, so in particular
    $2^{k-3}r_0\leq 2^{j}r \leq 2^{k+3}r_0$.

  \item If $y\in A_2(0,r,j) \cap B_x(r_0)\neq\emptyset$, then
    $|y|\in[2^{j-1}r,2^j r]$
    and $|x-y|\leq r_0$. The triangle inequality and the bounds on
    $|x-y|$ and $|x|$ imply $|y| = |y-x+x| \leq 2r_0 + r$ and therefore
    $2^{j-1}r\leq 2r_0+r$. 
    %
    %
  \end{enumerate}
\end{proof}

\begin{proposition}
  \label{eqgge}
  Let $r>0$ and $(T_r)_{r>0}$ be a family of linear operators that satisfy
  the dyadic $(p,q,\sigma)$ Davies--Gaffney estimate \eqref{eq:gge3}
  (Definition \ref{gge}). 
  Then
  \begin{align}
    \label{eq:gge3eq}
    \begin{split}
      & \|\one_{B_x(r_0)} T_r\one_{A_2(x,r_0,k)}\|_{p\to q}\\
      & \quad \lesssim_{d,\sigma,\beta,p,q} C_{\DG}\, r^{-d\left(\frac1p-\frac1q\right)}\,\left(1 + \frac{|B_x(r_0)|}{r^d}\right)^{1/q}\\
      & \qquad \times \left[\left(\frac{d(B_x(r_0),A_2(x,r_0,k))}{r}\right)^{-\beta} \! \cdot \! \left(\frac{|A_2(x,r_0,k)|}{r^d}\right)^{\! \frac1\sigma}\theta(k\!-\!2) \!+\! \theta(1\!-\!k)\right]
    \end{split}
  \end{align}
  holds for all $x\in\R^d$, $r_0>0$, and $k\in\N_0$.
  In fact, the factor $(1+|B_x(r_0)|/r^d)^{1/q}$ can be
  removed for $k\in\{0,1\}$, i.e.,
  \begin{align}
    \label{eq:gge3eqimproved}
    \begin{split}
      & \|\one_{B_x(r_0)} T_r\one_{A_2(x,r_0,k)}\|_{p\to q}\\
      & \quad \lesssim_{d,\sigma,\beta,p,q} C_{\DG}\, r^{-d\left(\frac1p-\frac1q\right)}\!\left[\! \left(\frac{d(B_x(r_0),A_2(x,r_0,k))}{r}\right)^{-\beta} \! \cdot \! \left(\frac{|A_2(x,r_0,k)|}{r^d}\right)^{\! \frac1\sigma} \right.\\
      & \qquad\qquad\qquad\qquad\qquad\qquad  \left. \times \left(1 + \frac{|B_x(r_0)|}{r^d}\right)^{\frac1q}\theta(k-2) + \theta(1-k) \right]
    \end{split}
  \end{align}
  holds for all $x\in\R^d$, $r_0>0$, and $k\in\N_0$.
\end{proposition}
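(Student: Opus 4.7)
The strategy is to localize $B_x(r_0)$ to the natural scale $r$ of $T_r$ by a covering argument, apply the dyadic Davies--Gaffney hypothesis \eqref{eq:gge3} at each $r$-scale center, and use the geometric dichotomy of Lemma~\ref{geomannuli} to collapse the resulting $j$-sum. It suffices to prove \eqref{eq:gge3eqimproved}, since \eqref{eq:gge3eq} is a weaker consequence.

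I would first choose a maximal $r/2$-separated set $\{z_i\}\subseteq B_x(r_0)$, so that $B_x(r_0)\subseteq\bigcup_i B_{z_i}(r)$ with $\#\{z_i\}\lesssim (1+r_0/r)^d$, and fix a disjoint partition $\{Q_i\}$ of $B_x(r_0)$ with $Q_i\subseteq B_{z_i}(r)$. For $q<\infty$ the disjointness gives
\[
\|\one_{B_x(r_0)}T_r f\|_q^q = \sum_i \|\one_{Q_i}T_r f\|_q^q \leq \sum_i \|\one_{B_{z_i}(r)}T_r f\|_q^q,
\]
and the case $q=\infty$ is handled with $\sup_i$. For $f$ supported in $A_2(x,r_0,k)$, I insert the dyadic partition $1=\sum_{j\geq 0}\one_{A_2(z_i,r,j)}$ and apply \eqref{eq:gge3} with $g(2^j)\lesssim (1+2^j)^{-\beta}$:
\[
\|\one_{B_{z_i}(r)}T_r\one_{A_2(x,r_0,k)}f\|_q \lesssim C_{\DG}\, r^{-d(\frac1p-\frac1q)} \sum_{j\in J_i^{(k)}}(1+2^j)^{-\beta}\, 2^{jd/\sigma}\, a_{i,j},
\]
with $a_{i,j}:=\|\one_{A_2(z_i,r,j)\cap A_2(x,r_0,k)}f\|_p$ and $J_i^{(k)}:=\{j: A_2(z_i,r,j)\cap A_2(x,r_0,k)\neq\emptyset\}$. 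Lemma~\ref{geomannuli}(1) forces $J_i^{(k)}$ to consist of $O(1)$ scales $j$ with $2^j r\sim 2^k r_0$ when $k\geq 2$, while Lemma~\ref{geomannuli}(2) gives $J_i^{(k)}\subseteq\{j: 2^j\lesssim 1+r_0/r\}$ when $k\in\{0,1\}$.

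To close the sum in $i$, I would bound the overlap $M_j:=\sup_y\#\{i:y\in A_2(z_i,r,j)\}\lesssim \min\{2^{jd},(1+r_0/r)^d\}$, which follows from the $r/2$-separation of $\{z_i\}$ inside $B_x(r_0)$. Combining the pointwise bound $a_{i,j}\leq\|f\|_p$ with the summability $\sum_i a_{i,j}^p\leq M_j\|f\|_p^p$ through
\[
\sum_i a_{i,j}^q \leq \bigl(\sup_i a_{i,j}\bigr)^{q-p}\sum_i a_{i,j}^p \leq \|f\|_p^{q-p}\, M_j\, \|f\|_p^p
\]
yields $(\sum_i a_{i,j}^q)^{1/q}\leq M_j^{1/q}\|f\|_p$. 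For $k\geq 2$ the surviving scale $j_k$ contributes $(1+2^{j_k})^{-\beta}\, 2^{j_k d/\sigma}\lesssim (2^k r_0/r)^{-\beta}(2^k r_0/r)^{d/\sigma}$ together with $M_{j_k}^{1/q}\lesssim (1+|B_x(r_0)|/r^d)^{1/q}$, reproducing the $\theta(k-2)$ term of \eqref{eq:gge3eqimproved} after identifying $d(B_x(r_0),A_2(x,r_0,k))\sim 2^k r_0$ and $|A_2(x,r_0,k)|\sim (2^k r_0)^d$. For $k\in\{0,1\}$ the $j$-sum is controlled by $\sum_j (1+2^j)^{-\beta}\, 2^{jd(1/\sigma+1/q)}$, which converges by the standing assumption $\beta>d(1/\sigma+1/q)$ built into Definition~\ref{gge}, producing the bare $C_{\DG}\, r^{-d(\frac1p-\frac1q)}\|f\|_p$ bound without any volume factor.

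The main subtlety is obtaining the exponent $1/q$ (rather than $1/p$) on the volume factor $(1+|B_x(r_0)|/r^d)$: using only the $\ell^p$-overlap bound would give $M_j^{1/p}$, and one must interpolate with the trivial pointwise estimate $a_{i,j}\leq\|f\|_p$ as above. The other delicate point is tracking the geometric content of Lemma~\ref{geomannuli} precisely enough so that the covering overlap depends on $|B_x(r_0)|$ rather than on $|A_2(x,r_0,k)|$, which is what separates the two regimes $k\geq 2$ and $k\leq 1$ cleanly.
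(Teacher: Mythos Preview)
Your argument is correct and arrives at \eqref{eq:gge3eqimproved}, but it proceeds by a genuinely different route from the paper's. The paper does not use a discrete covering: instead it invokes the continuous ball-average identity $\|g\|_q\sim\|N_{q,r}g\|_q$ from Lemma~\ref{collectionmaximal}, which produces an integral $\int_{|z-x|\leq r+r_0}(\ldots)^q\,dz$ over moving centers $z$. After applying the hypothesis \eqref{eq:gge3} at each $z$ and the trivial bound $\|\one_{A_2(z,r,j)}\one_{A_2(x,r_0,k)}f\|_p\leq\|f\|_p$, the factor $(1+|B_x(r_0)|/r^d)^{1/q}$ falls out automatically from the $z$-integration volume---no overlap counting or $\ell^p$--$\ell^q$ interpolation is needed. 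This yields \eqref{eq:gge3eq} under the milder condition $\beta>d/\sigma$; the paper then removes the volume factor for $k\in\{0,1\}$ by appealing separately to Proposition~\ref{hypercontractive}, which is where the full decay $\beta>d(1/\sigma+1/q)$ enters.

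Your discrete approach trades the $N_{q,r}$ machinery for the overlap bound $M_j\lesssim\min\{2^{jd},(1+r_0/r)^d\}$ together with the interpolation $\sum_i a_{i,j}^q\leq(\sup_i a_{i,j})^{q-p}\sum_i a_{i,j}^p$, and thereby proves \eqref{eq:gge3eqimproved} in one pass, without invoking Proposition~\ref{hypercontractive} as an external input. The price is that you use the full hypothesis $\beta>d(1/\sigma+1/q)$ from the outset, so you do not isolate the paper's side observation that \eqref{eq:gge3eq} alone requires only $\beta>d/\sigma$. One small point you glossed over: for $k\geq2$ the set $J_i^{(k)}$ may also contain $j\in\{0,1\}$, but only when $2^kr_0\lesssim r$, and then the target bound $(2^kr_0/r)^{d/\sigma-\beta}\gtrsim1$ absorbs this contribution; this is handled explicitly in the paper's case analysis.
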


Recall that \eqref{eq:gge3} only involved one radius.
Here the radii $r$ appearing in $T_r$,
and $r_0$ appearing in $B_x(r_0)$ and $A_2(x,r_0,k)$
in formula \eqref{eq:gge3eq} are independent of each other.
Note also that the proof of \eqref{eq:gge3eq} only needs $\beta>d/\sigma$.

\begin{proof}
  To prove \eqref{eq:gge3eq} we use $\|f\|_q\sim \|N_{q,r}f\|_q$,
  decompose dyadically, apply \eqref{eq:gge3}, and obtain
  \begin{align}
    \label{eq:anyr0}
    \begin{split}
      & \|\one_{B_x(r_0)}T_r\one_{A_2(x,r_0,k)}f\|_q
      \lesssim_q \|N_{q,r}(\one_{B_x(r_0)}T_r\one_{A_2(x,r_0,k)}f)\|_q\\
      & \quad = \left(\! \int_{\R^d}\left(\! r^{-\frac dq}\|\one_{B_z(r)}\one_{B_x(r_0)}T_r \sum_{j=0}^\infty \one_{A_2(z,r,j)}\one_{A_2(x,r_0,k)}f\|_q \!\right)^q\,dz \!\right)^{1/q}\\
      & \quad \leq C_{\DG}\, r^{-\frac dp}\sum_{j=0}^\infty g(2^j)2^{\frac{jd}{\sigma}} \left(\int\limits_{|z-x|\leq r+r_0} \|\one_{A_2(z,r,j)}\one_{A_2(x,r_0,k)}f\|_p^q\,dz\right)^{1/q}\,.
    \end{split}
  \end{align}
  To estimate $(...)^{1/q}$ on the right side we use
  Lemma~\ref{geomannuli} (with $|x-z|\leq r+r_0$) and
  distinguish between the following cases.
  \begin{enumerate}
  \item If $j,k\in\N\setminus\{1\}$, and
    $A_2(z,r,j)\cap A_2(x,r_0,k)\neq\emptyset$,
    then $2^{k-3}r_0 \leq 2^{j}r \leq 2^{k+3}r_0$
    by (1) in Lemma~\ref{geomannuli}.
  \item If $k\in\N\setminus\{1\}$, $j\in\{0,1\}$, and
    $A_2(z,r,j)\cap A_2(x,r_0,k)\neq\emptyset$, then
    $B_z(2^jr)\cap A_2(x,r_0,k)\neq\emptyset$
    and so $2^jr\geq 2^{k-3}r_0$
    by (2) in Lemma~\ref{geomannuli}.
  \item If $k\in\{0,1\}$, we put no restriction on $j\in\N_0$.
  \end{enumerate}
  Thus,
  \begin{align*}
    & \|\one_{A_2(z,r,j)}\one_{A_2(x,r_0,k)}f\|_p\\
    & \quad \lesssim \|f\|_p\left[\one_{\{2^j r \geq 2^{k-3}r_0\}}\theta(k-2)\left(\one_{\{2^j r\leq2^{k+3}r_0\}}\theta(j-2)+\theta(1-j)\right) + \theta(1-k)\right]\,.
  \end{align*}
  If $k\in\{0,1\}$, we simply sum over all $j\in\N_0$.
  If $k\in\N\setminus\{1\}$ and $j\in\N\setminus\{1\}$, we use
  $g(2^j)2^{jd/\sigma}\lesssim_\beta 2^{-j(\beta-d/\sigma)}$
  and that we are summing over dyadic numbers.
  If $k\in\N\setminus\{1\}$ and $j\in\{0,1\}$, we use
  $\beta\geq d/\sigma$ and $2^jr\geq 2^{k-3}r_0$ to estimate
  $g(2^j)\cdot 2^{\frac{jd}{\sigma}} \lesssim_{d,\beta,\sigma} (2^kr_0/r)^{-\beta+d/\sigma}$.
  Thus, \eqref{eq:anyr0} can be estimated by  
  \begin{align*}
    & \|\one_{B_x(r_0)}T_r \one_{A_2(x,r_0,k)}f\|_q\\
    & \quad \lesssim_q C_{\DG}\, r^{-\frac dp} \Big(\int\limits_{|z-x|\leq r+r_0} dz\Big)^{1/q}\|f\|_p\\
    & \times \sum_{j=0}^\infty \!\left[ 2^{j(\frac{d}{\sigma}-\beta)} \! \left(\one_{\{2^j r \geq 2^{k-3}r_0\}}\theta(k-2)(\one_{\{2^j r\leq 2^{k+3}r_0\}}\theta(j-2)+\theta(1-j)) + \theta(1-k)\right)\right]\\
    & \quad \lesssim_{d,\sigma,\beta,q} C_{\DG}\, r^{-d(\frac1p-\frac1q)} \cdot \left(1+\frac{r_0}{r}\right)^{\frac dq} \left[\left(\frac{2^kr_0}{r}\right)^{-\beta+d/\sigma}\theta(k-2)+\theta(1-k)\right]\|f\|_p\,.
  \end{align*}
  Since $2^kr_0\sim d(B_x(r_0),A_2(x,r_0,k))\sim |A_2(x,r_0,k)|^{\frac1d}$
  for $k\geq2$, this proves \eqref{eq:gge3eq}.
  The improved estimate \eqref{eq:gge3eqimproved} for $k\in\{0,1\}$
  follows from 
  $\|\one_{B_x(r_0)} T_r\one_{A_2(x,r_0,k)}\|_{p\to q}\leq\|T_r\|_{p\to q}$
  and Proposition~\ref{hypercontractive}.
  This concludes the proof of Proposition~\ref{eqgge}.
\end{proof}

The proof of Corollary~\ref{plggecor} relies on the following
proposition which says that $(p,p',p')$ estimates imply $(2,p',2)$
and $(p,2,p')$ estimates, if one assumes additionally
$\sum_{k\geq0}g(2^k)2^{kd(\frac{1}{p'}+\frac12)}<\infty$.
This assumption is contained in the notion of dual dyadic
Davies--Gaffney estimates.

\begin{proposition}
  \label{hypercontractivedual}
  Let $p\in[1,2]$ and $r>0$.

  (1) If $(T_r)_{r>0}$ is a family of linear operators
  that satisfy the dual dyadic $(p,p',p')$ Davies--Gaffney
  estimate \eqref{eq:gge3} (Definition \ref{ggerestricted}),
  then $T_r$ satisfies the dyadic $(2,p',2)$ Davies--Gaffney
  estimate (Definition \ref{gge}), i.e.,
  \begin{subequations}
    \begin{align}
      \label{eq:hypercontractivedual}
      \|\one_{B_x(r)}T_r\one_{A_2(x,r,k)}\|_{2\to p'}
      \lesssim_{d,\beta,p} C_\DG\, r^{-d\left(\frac12-\frac{1}{p'}\right)}g(2^k)2^{\frac{kd}{2}}\,,
      \quad x\in\R^d,k\in\N_0
    \end{align}
    and the dyadic $(p,2,p')$ Davies--Gaffney estimate, i.e.,
    \begin{align}
      \label{eq:hypercontractivedual2}
      \|\one_{B_x(r)}T_r\one_{A_2(x,r,k)}\|_{p\to 2}
      \lesssim_{d,\beta,p} C_\DG\, r^{-d\left(\frac1p-\frac12\right)}g(2^k)2^{\frac{kd}{p'}}\,,
      \quad x\in\R^d,k\in\N_0\,.
    \end{align}
  \end{subequations}

  (2) If $(T_r)_{r>0}$ satisfies the restricted dyadic $(p,p',p')$
  Davies--Gaffney estimate \eqref{eq:gge3} (Definition \ref{ggerestricted}),
  then it satisfies the restricted dyadic $(2,p',2)$ and $(p,2,p')$ Davies--Gaffney
  estimates \eqref{eq:hypercontractivedual} and \eqref{eq:hypercontractivedual2}.
\end{proposition}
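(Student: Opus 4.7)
The plan is to derive both claimed estimates from the hypothesized $(p,p',p')$ bound by applying H\"older's inequality locally. Since $p\in[1,2]$ and hence $p'\geq2$, on any subset $E\subseteq\R^d$ of finite measure one may trade an $L^p$ norm for an $L^2$ norm by losing a factor $|E|^{1/p-1/2}$, or trade an $L^{p'}$ norm for an $L^2$ norm by losing $|E|^{1/2-1/p'}$. The crucial arithmetic identity $\frac{1}{p'}+\left(\frac{1}{p}-\frac{1}{2}\right)=\frac{1}{2}$ ensures that the extra volume factor absorbed on the source side combines with the $2^{kd/p'}$ growth from the hypothesis to yield exactly the $2^{kd/2}$ factor required by the $(2,p',2)$ conclusion; for the $(p,2,p')$ conclusion the dualized H\"older step produces no $2^k$ factor, leaving the $2^{kd/p'}$ exponent intact.

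For \eqref{eq:hypercontractivedual} I would, for any $f\in L^2$, write
\[
\|\one_{B_x(r)} T_r \one_{A_2(x,r,k)} f\|_{p'} \leq \|\one_{B_x(r)} T_r \one_{A_2(x,r,k)}\|_{p\to p'}\, \|\one_{A_2(x,r,k)} f\|_p,
\]
insert the hypothesized bound $C_{\DG}\, r^{-d(1/p-1/p')} g(2^k) 2^{kd/p'}$ on the operator norm, and apply H\"older to obtain $\|\one_{A_2(x,r,k)} f\|_p \lesssim |A_2(x,r,k)|^{1/p-1/2}\|f\|_2 \sim (2^{kd} r^d)^{1/p-1/2}\|f\|_2$. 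Collecting powers of $r$ and $2^k$ via the identity above produces precisely $C_{\DG}\, r^{-d(1/2-1/p')} g(2^k) 2^{kd/2}\|f\|_2$. For \eqref{eq:hypercontractivedual2} I would instead dualize on the target: since the output is supported in $B_x(r)$, H\"older gives $\|g\|_{L^2(B_x(r))} \leq |B_x(r)|^{1/2-1/p'}\|g\|_{L^{p'}(B_x(r))}$, so inserting the hypothesized bound and canceling powers of $r$ yields $C_{\DG}\, r^{-d(1/p-1/2)} g(2^k) 2^{kd/p'}\|f\|_p$.

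For part (2), it suffices to observe that the decay condition in the restricted dyadic $(p,p',p')$ hypothesis, namely $\beta>d(1/p+1/p')=d$, coincides with $\beta>d$ required for both restricted dyadic $(2,p',2)$ (where the constraint reads $\beta>d(1/2+1/2)=d$) and restricted dyadic $(p,2,p')$ (where it reads $\beta>d(1/p+1/p')=d$). Since the H\"older argument of part (1) reproduces the same rate $g(2^k)$ as in the hypothesis, only modifying the $2^{kd/\sigma}$ prefactor, this restricted decay condition is preserved automatically and the proof of part (1) applies verbatim. The only genuine step requiring care is the exponent bookkeeping, which works exactly because of $p\leq2\leq p'$; no structural information about $T_r$ beyond the assumed $(p,p',p')$ estimate is invoked.
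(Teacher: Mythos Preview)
Your proof is correct and considerably more direct than the paper's. The paper proves \eqref{eq:hypercontractivedual} by first invoking the equivalence $\|f\|_{p'}\sim\|N_{p',r}f\|_{p'}$ for the ball-averaging operator $N_{p',r}$, then inserting a second dyadic decomposition $\sum_{j\geq0}\one_{A_2(z,r,j)}$ centred at the averaging point $z$, applying the $(p,p',p')$ hypothesis at each $z$, and finally using the geometric Lemma~\ref{geomannuli} to show that only $j$ with $|j-k|\leq4$ contribute. The proof of \eqref{eq:hypercontractivedual2} is analogous but additionally uses $(N_{2,r}f)(x)\lesssim_p (N_{p',r}f)(x)$. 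By contrast, you bypass all of this by a single application of H\"older's inequality on the annulus (for the $2\to p'$ bound) or on the ball (for the $p\to2$ bound), exploiting that both sets have explicit volumes $\sim(2^kr)^d$ and $\sim r^d$ respectively. The exponent arithmetic $\tfrac1{p'}+(\tfrac1p-\tfrac12)=\tfrac12$ is exactly what makes the growth factors match, as you observe. Your argument is more elementary and yields the same conclusions with the same constants; the paper's machinery, while heavier here, is presumably set up with an eye toward the other results in Section~\ref{ss:consequencesgge} (notably Proposition~\ref{eqgge}) where the two-centre decomposition and Lemma~\ref{geomannuli} are genuinely needed.
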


\begin{proof}
  Since $\beta>d(1/2+1/p')$ for the dual dyadic $(p,p',p')$ estimates
  and $\beta>d$ for the restricted dyadic $(p,p',p')$ estimates, it suffices
  to show \eqref{eq:hypercontractivedual} and \eqref{eq:hypercontractivedual2}.
  
  To prove \eqref{eq:hypercontractivedual} we use
  $\|f\|_{p'} \sim_p \|N_{p',r}f\|_{p'}$
  and obtain for $f\in L^2$,
  \begin{align*}
    & \|\one_{B_x(r)}T_r\one_{A_2(x,r,k)}f\|_{p'}
    \lesssim_p \|N_{p',r}\one_{B_x(r)}T_r\one_{A_2(x,r,k)}f\|_{p'}\\
    & \quad = \left( \int_{\R^d}dz \left( r^{-\frac{d}{p'}}\|\one_{B_z(r)}\one_{B_x(r)}T_r \one_{A_2(x,r,k)}\sum_{j\geq0}\one_{A_2(z,r,j)}f\|_{p'} \right)^{p'} \right)^{1/p'}\\
    & \quad \leq C_\DG\, r^{-\frac{d}{p}}\sum_{j\geq0}g(2^j)2^{\frac{jd}{p'}} \left(\int\limits_{|x-z|\leq2r}dz\ \|\one_{A_2(x,r,k)}\one_{A_2(z,r,j)}f\|_{p}^{p'}\right)^{1/p'}\,.
  \end{align*}
  We use Lemma~\ref{geomannuli} (with $|x-z|\leq 2r$) and
  distinguish between the following cases.
  \begin{enumerate}
  \item If $k,j\in\N\setminus\{1\}$ and
    $A_2(x,r,k)\cap A_2(z,r,j)\neq\emptyset$, then
    $2^{k-3}\leq 2^{j}\leq 2^{k+3}$ by (1) in Lemma~\ref{geomannuli}.
  \item If $k\in\N\setminus\{1\}$, $j\in\{0,1\}$, and
    $A_2(x,r,k)\cap A_2(z,r,j)\neq\emptyset$, then
    $A_2(x,r,k)\cap B_z(2^jr)\neq\emptyset$
    and so $k\leq4$ by (2) in Lemma~\ref{geomannuli}.
  \item If $k\in\{0,1\}$, $j\in\N_0$, and
    $A_2(x,r,k)\cap A_2(z,r,j)\neq\emptyset$, then
    $B_x(2^kr)\cap A_2(z,r,j)\neq\emptyset$
    and so $j\leq4$ by (2) in Lemma~\ref{geomannuli}.
  \end{enumerate}
  Using this observation and H\"older's inequality, we can estimate
  \begin{align*}
    \|\one_{A_2(x,r,k)}\one_{A_2(z,r,j)}f\|_p
    \lesssim_{d,p,\beta} \one_{\{|j-k|\leq4\}}(2^kr)^{d(\frac1p-\frac12)}\|f\|_2\,,
    \quad |x-z|\leq2r
  \end{align*}
  and deduce (using $g(2^j)\sim_\beta 2^{-j\beta}$)
  \begin{align*}
    & \|\one_{B_x(r)}T_r\one_{A_2(x,r,k)}f\|_{p'}\\
    & \quad \lesssim C_\DG\, r^{-\frac d2}\sum_{j\geq0}g(2^j)2^{\frac{jd}{p'}}2^{kd(\frac1p-\frac12)}\one_{\{|j-k|\leq4\}}\Big(\int\limits_{|x-z|\leq2r}dz \Big)^{\frac{1}{p'}}\|f\|_2\\
    & \quad \lesssim_{p,d,\beta} C_\DG\, r^{-d(\frac12-\frac{1}{p'})}g(2^k)2^{kd/2}\|f\|_2\,.
  \end{align*}
  This concludes the proof of \eqref{eq:hypercontractivedual}.

  The proof of \eqref{eq:hypercontractivedual2} is similar but
  uses also (1) of Lemma~\ref{collectionmaximal}, i.e.,
  $(N_{2,r}f)(x)\lesssim_p (N_{p',r}f)(x)$.
  By the above reasoning and
  $\|\one_{A_2(x,r,k)}\one_{A_2(z,r,j)}f\|_{p}\leq \one_{\{|j-k|\leq4\}}\|f\|_p$
  for $|x-z|<2r$, we obtain
  \begin{align*}
    & \|\one_{B_x(r)}T_r\one_{A_2(x,r,k)}f\|_{2}
    \lesssim \|N_{2,r}\one_{B_x(r)}T_r\one_{A_2(x,r,k)}f\|_{2}
    \lesssim_p \|N_{p',r}\one_{B_x(r)}T_r\one_{A_2(x,r,k)}f\|_{2}\\
    & \quad = \left( \int_{\R^d}dz \left( r^{-\frac{d}{p'}}\|\one_{B_z(r)}\one_{B_x(r)}T_r \one_{A_2(x,r,k)}\sum_{j\geq0}\one_{A_2(z,r,j)}f\|_{p'} \right)^{2} \right)^{1/2}\\
    & \quad \leq C_\DG\, r^{-\frac{d}{p}}\sum_{j\geq0}g(2^j)2^{\frac{jd}{p'}} \left(\int\limits_{|x-z|\leq2r}dz\ \|\one_{A_2(x,r,k)}\one_{A_2(z,r,j)}f\|_{p}^{2}\right)^{1/2}\\
    & \quad \lesssim C_\DG\, r^{-d(\frac1p-\frac12)}\|f\|_p \sum_{j\geq0}g(2^j)2^{\frac{jd}{p'}}\one_{\{|j-k|\leq4\}}
      \lesssim C_\DG\, r^{-d(\frac1p-\frac12)}g(2^k)2^{\frac{kd}{p'}}\|f\|_p\,.
  \end{align*}
  This concludes the proof of Proposition~\ref{hypercontractivedual}.
\end{proof}

Combining Proposition~\ref{hypercontractivedual}
with Proposition~\ref{hypercontractive} yields the
following variant of \eqref{eq:dualityheat}, which does not need that
$T_r$ has the semigroup property.

\begin{corollary}
  \label{hypercontractivedualcor}
  Let $p\in[1,2]$, $r>0$, and $(T_r)_{r>0}$ be a family of linear
  operators that satisfy the dual dyadic $(p,p',p')$ Davies--Gaffney
  estimate \eqref{eq:gge3} (Definition \ref{ggerestricted}).
  Then
  \begin{subequations}
    \begin{align}
      \label{eq:hypercontractivedualcor1}
      \|T_r\|_{2\to p'}
      \lesssim_{d,\beta,p} C_\DG\, r^{-d\left(\frac12-\frac{1}{p'}\right)}
    \end{align}
    and
    \begin{align}
      \label{eq:hypercontractivedualcor2}
      \|T_r\|_{p\to 2}
      \lesssim_{d,\beta,p} C_\DG\, r^{-d\left(\frac1p-\frac12\right)}
    \end{align}
  \end{subequations}
  for all $x\in\R^d$.
\end{corollary}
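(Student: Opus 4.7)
The plan is to chain two already established results without any further analysis; the role of this corollary is precisely to package what has been proved. First I would apply Proposition \ref{hypercontractivedual}(1): the hypothesis that $(T_r)_{r>0}$ satisfies the dual dyadic $(p,p',p')$ Davies--Gaffney estimate supplies, simultaneously, the dyadic $(2,p',2)$ estimate \eqref{eq:hypercontractivedual} and the dyadic $(p,2,p')$ estimate \eqref{eq:hypercontractivedual2}, both with the same constant $C_\DG$ (up to factors depending only on $d,\beta,p$) and with the decay function $g$ inherited from the hypothesis.

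A small compatibility point to verify is that the summability condition $\beta > d(1/\sigma + 1/q)$ demanded by Definition \ref{gge} is respected by the outputs of Proposition \ref{hypercontractivedual}: for the $(2,p',2)$ estimate this reads $\beta > d(1/2 + 1/p')$, and for the $(p,2,p')$ estimate it reads $\beta > d(1/p' + 1/2)$. Both conditions coincide and are built into the very definition of the dual dyadic $(p,p',p')$ Davies--Gaffney estimate (Definition \ref{ggerestricted}), so the conclusion of Proposition \ref{hypercontractivedual} actually meets the hypotheses of Proposition \ref{hypercontractive}.

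Second, I would feed each of the two resulting dyadic estimates into Proposition \ref{hypercontractive}. For $(p,q,\sigma) = (2,p',2)$ it yields $\|T_r\|_{2\to p'} \lesssim_{d,\beta,p} C_\DG\, r^{-d(1/2 - 1/p')}$, which is \eqref{eq:hypercontractivedualcor1}, and for $(p,q,\sigma) = (p,2,p')$ it yields $\|T_r\|_{p\to 2} \lesssim_{d,\beta,p} C_\DG\, r^{-d(1/p - 1/2)}$, which is \eqref{eq:hypercontractivedualcor2}. Since both bounds come directly from their respective dyadic estimates, no duality step is invoked.

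There is essentially no obstacle beyond this bookkeeping. The point of the corollary, relative to \eqref{eq:dualityheat} in Proposition \ref{hypercontractive}, is that the semigroup identity $T_{r+s} = T_r T_s$ and the self-adjointness of $T_r$ are no longer needed: the $L^2 \to L^{p'}$ and $L^p \to L^2$ bounds are obtained separately from the dyadic $(2,p',2)$ and $(p,2,p')$ estimates rather than through the identity $\|T_{2r}\|_{p\to p'} = \|T_r\|_{p\to 2}^2 = \|T_r\|_{2\to p'}^2$, which is the only place in the earlier proof where semigroup structure was exploited.
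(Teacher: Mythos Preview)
Your proposal is correct and matches the paper's own reasoning exactly: the paper introduces this corollary with the sentence ``Combining Proposition \ref{hypercontractivedual} with Proposition \ref{hypercontractive} yields the following variant of \eqref{eq:dualityheat}, which does not need that $T_r$ has the semigroup property,'' and gives no further proof. Your verification that the condition $\beta > d(1/\sigma + 1/q)$ is met in both cases (and coincides with the defining inequality $\beta > d(1/2 + 1/p')$ for the dual dyadic estimate) is precisely the bookkeeping that makes the chaining legitimate.
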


We now show that restricted dyadic $(p,2,p')$ and $(p,p',p')$ estimates
imply (restricted) dyadic $(p,p,p')$ estimates and $L^p$ boundedness of $T_r$.

\begin{corollary}
  \label{lpboundedness}
  Let $p\in[1,2]$, $r>0$, and $(T_r)_{r>0}$ be a family of linear 
  operators that satisfy the restricted dyadic $(p,2,p')$ or $(p,p',p')$
  Davies--Gaffney estimate \eqref{eq:gge3} (Definition \ref{ggerestricted}).
  Then $T_r$ satisfies the restricted dyadic $(p,p,p')$ Davies--Gaffney estimate
  \begin{align}
    \label{eq:lpboundednessgge}
    \|\one_{B_x(r)}T_r\one_{A_2(x,r,k)}\|_{p\to p}
    \lesssim_{d,\beta,p} C_\DG\, 2^{\frac{kd}{p'}}g(2^k)\,,
    \quad x\in\R^d,k\in\N_0\,.
  \end{align}
  Moreover,
  \begin{align}
    \label{eq:lpboundednessgge2}
    \|T_r\|_{p\to p} = \|(T_r)^*\|_{p'\to p'} \lesssim_{d,\beta,p} C_\DG\,.
  \end{align}
\end{corollary}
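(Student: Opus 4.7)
The plan is to reduce both alternative hypotheses to a single dyadic $(p,2,p')$ Davies--Gaffney estimate, upgrade it to the dyadic $(p,p,p')$ estimate \eqref{eq:lpboundednessgge} by an elementary Hölder step, and then invoke Proposition \ref{hypercontractive} to sum the dyadic pieces into the global bound \eqref{eq:lpboundednessgge2}.

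First, by part (2) of Proposition \ref{hypercontractivedual}, the restricted dyadic $(p,p',p')$ Davies--Gaffney estimate implies the restricted dyadic $(p,2,p')$ Davies--Gaffney estimate. Hence in either case I may assume that $T_r$ satisfies the restricted dyadic $(p,2,p')$ estimate, and in particular the decay condition $\beta>d(1/p+1/p')=d$. Using $p\leq 2$ and Hölder's inequality on the ball $B_x(r)$, I would write, for any $f\in L^p$,
\begin{align*}
\|\one_{B_x(r)}T_r\one_{A_2(x,r,k)}f\|_p
&\leq |B_x(r)|^{\frac{1}{p}-\frac{1}{2}}\,\|\one_{B_x(r)}T_r\one_{A_2(x,r,k)}f\|_2\\
&\lesssim_{d,\beta,p} r^{d(\frac{1}{p}-\frac{1}{2})}\cdot C_\DG\, r^{-d(\frac{1}{p}-\frac{1}{2})}\,g(2^k)\,2^{kd/p'}\,\|f\|_p,
\end{align*}
which is precisely \eqref{eq:lpboundednessgge}. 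The volume factor $r^{d(1/p-1/2)}$ exactly cancels the scaling in the $(p,2,p')$ estimate, leaving a clean dyadic $(p,p,p')$ bound; because the original $\beta$ exceeds $d(1/p+1/p')$, the resulting estimate is moreover restricted in the sense of Definition \ref{ggerestricted}.

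For \eqref{eq:lpboundednessgge2}, I then apply Proposition \ref{hypercontractive} to $T_r$ with parameters $(p,q,\sigma)=(p,p,p')$. The summability condition $\beta>d(1/\sigma+1/q)=d(1/p'+1/p)=d$ required there is exactly the decay we just inherited from the restrictedness hypothesis. This yields $\|T_r\|_{p\to p}\lesssim_{d,\beta,p}C_\DG$, and the identity $\|T_r\|_{p\to p}=\|(T_r)^*\|_{p'\to p'}$ is standard duality. The only point that demands attention is bookkeeping of the exponents: verifying that the strict inequality $\beta>d$ guaranteed by Definition \ref{ggerestricted} is precisely what is needed to sum the dyadic series in Proposition \ref{hypercontractive}. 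Everything else is routine.
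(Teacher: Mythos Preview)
Your proof is correct and essentially identical to the paper's: both reduce to the restricted $(p,2,p')$ estimate via Proposition~\ref{hypercontractivedual}, apply H\"older on $B_x(r)$ to convert the $L^2$ output into $L^p$ (the volume factor $r^{d(1/p-1/2)}$ cancels exactly), and then invoke Proposition~\ref{hypercontractive} for the global $L^p\to L^p$ bound. Your explicit check that the restrictedness condition $\beta>d$ coincides with the summability requirement in Proposition~\ref{hypercontractive} for $(p,p,p')$ is a nice addition.
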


\begin{proof}
  By Proposition~\ref{hypercontractivedual} it suffices to assume that
  $(T_r)_{r>0}$ satisfies the restricted $(p,2,p')$ Davies--Gaffney estimates.
  By H\"older's inequality and \eqref{eq:hypercontractivedual2} we have
  \begin{align*}
    \|\one_{B_x(r)}T_r\one_{A_2(x,r,k)}f\|_{p}
    & \leq |B_x(r)|^{\frac{2-p}{2p}} \|\one_{B_x(r)}T_r\one_{A_2(x,r,k)}f\|_2\\
    & \lesssim_{d,\beta,p} C_\DG\, r^{d\frac{2-p}{2p} - d\left(\frac1p-\frac12\right)} 2^{\frac{kd}{p'}}g(2^k)\|f\|_p
  \end{align*}
  for any $f\in L^p$, $x\in\R^d$, and $k\in\N_0$. This proves
  \eqref{eq:lpboundednessgge}.
  Formula \eqref{eq:lpboundednessgge2} follows from
  \eqref{eq:lpboundednessgge} and Proposition~\ref{hypercontractive}.
  This concludes the proof.
\end{proof}

\section*{Acknowledgments}
I wish to thank Volker Bach and Jean--Claude Cuenin for valuable discussions,
and an anonymous referee for many helpful remarks and suggestions. 


\def\cprime{$'$}

\end{document}